  \DeclareMathOperator{\sgn}{sgn}
  \DeclareMathOperator{\dist}{dist}
  \newcommand{\bbN}{{\mathbb{N}}}
  \newcommand{\bbR}{{\mathbb{R}}}
  \newcommand{\bbZ}{{\mathbb{Z}}}
  \newcommand{\bbC}{{\mathbb{C}}}
  \newcommand{\bbT}{{\mathbb{T}}}
  \newcommand{\cD}{{\mathcal{D}}}
  \newcommand{\cP}{{\mathcal{P}}}
  \newcommand{\cR}{{\mathcal{R}}}
  \newcommand{\cS}{{\mathcal{S}}}
  \newcommand{\norm}[1]{\left\lVert#1\right\rVert}
  \newcommand{\Z}{\ensuremath{\mathbb{Z}}}
  \newcommand{\R}{\ensuremath{\mathbb{R}}}
  \newcommand{\C}{\ensuremath{\mathbb{C}}}
  \newcommand{\N}{\ensuremath{\mathbb{N}}}
  \newcommand{\dd}{\ \mathrm{d}}
  \renewcommand{\Im}{\operatorname{Im}}
  \renewcommand{\Re}{\operatorname{Re}}
\newcommand{\ac}{\mathrm{ac}}
\newcommand{\loc}{\mathrm{loc}}
\newcommand{\AC}{\mathrm{AC}}
  \theoremstyle{plain}
  \newtheorem{thm}{Theorem}[section]
  \newtheorem*{thm*}{Theorem}
  \newtheorem{lem}[thm]{Lemma}
  \newtheorem{prop}[thm]{Proposition}
  \newtheorem{cor}[thm]{Corollary}
  \theoremstyle{definition}
  \newtheorem{rem}{Remark}[section]
  \newtheorem*{ack}{Acknowledgments}
  \numberwithin{equation}{section}
  \numberwithin{thm}{section}
  \numberwithin{defn}{section}
\begin{document}

\author[M. Luki\'c, G. Young]{Milivoje Luki\'c, Giorgio Young}
\address{Department of Mathematics, Rice University, Houston, TX~77005, USA}
\email{milivoje.lukic@rice.edu,gfy1@rice.edu}

\thanks{M.L.\ was supported in part by NSF grant DMS--1700179.}
\thanks{G.Y.\ was supported in part by NSF grant DMS--1745670.}

\title{Uniqueness of solutions of the KdV-hierarchy via Dubrovin-type flows}

\begin{abstract}
 We consider the Cauchy problem for the KdV hierarchy -- a family of integrable PDEs with a Lax pair representation involving one-dimensional Schr\"odinger operators -- under a local in time boundedness assumption on the solution.
 For reflectionless initial data, we prove that the solution stays reflectionless. For almost periodic initial data with absolutely continuous spectrum, we prove that under Craig-type conditions on the spectrum, Dirichlet data evolve according to a Lipschitz Dubrovin-type flow, so the solution is uniquely recovered by a trace formula. This applies to algebro-geometric (finite gap) solutions; more notably, we prove that it applies to small quasiperiodic initial data with analytic sampling functions and Diophantine frequency.
This also gives a uniqueness result for the Cauchy problem on the line for periodic initial data, even in the absence of Craig-type conditions.
\end{abstract}

\maketitle

\section{Introduction and Main Results}

The KdV equation
  \begin{equation}\label{eqn:KdV1}
   \partial_t q +\frac14 \partial_x^3 q -\frac32 q \partial_x q=0
  \end{equation}
was studied in the 19th century by Boussinesq \cite{Bou} and Korteweg--de Vries \cite{KdV} as a model for solitary wave phenomena. Its modern theory began in the 1960s with works of Gardner--Greene--Kruskal--Miura \cite{GGKM} and Lax \cite{L68}, who discovered that the KdV equation has infinitely many conserved quantities and can be formally rephrased in a Lax pair representation and studied through the associated family of one-dimensional Schr\"odinger operators. The conserved quantities formally correspond to mutually commuting Hamiltonians; this makes the KdV equation one of an infinite family of mutually commuting partial differential equations, called the KdV hierarchy.

The KdV hierarchy is most concisely introduced as the sequence of partial differential equations
  \begin{equation}
\label{eq:kdvn}
\frac{\partial q}{\partial t} = 2\frac{\partial \hat{f}_{n+1}}{\partial x}, \;\; (x,t) \in \R \times \R\\
\end{equation}
where the $\hat{f}_{n}$ are differential polynomials in $q$, defined recursively by
  \begin{align*}
    &\hat{f}_0=1,\;\;\hat{f}_1=q/2,\\
    &\hat{f}_{\ell +1}=-\frac12\sum_{k=1}^\ell \hat{f}_k\hat{f}_{\ell+1-k}+\frac12
    \sum_{k=0}^\ell \left( q\hat{f}_k\hat{f}_{\ell -k}+\frac14\frac{\partial \hat{f}_k}{\partial x}
    \frac{\partial \hat{f}_{\ell-k}}{\partial x}-\frac12 \frac{\partial^2\hat{f}_k}{\partial x^2}\hat{f}_{\ell-k} \right).
  \end{align*}
  Note that setting $n=0$ in \eqref{eq:kdvn} gives the translation flow $\partial_t q = \partial_x q$ and setting $n=1$ recovers the KdV equation \eqref{eqn:KdV1}. For $n\ge 1$, the $n$-th equation is a nonlinear partial differential equation with spatial derivatives up to order $2n+1$, called the KdV-$n$ equation. Due to the Lax pair representation, these PDEs are said to be integrable; 50 years after this discovery, many other integrable PDEs have been discovered, and KdV continues to be one of the central models on which new integrability phenomena are explored \cite{De3}.

Of course, rigorous results about the KdV equation and hierarchy are dependent on the type of initial data being considered,
\begin{equation} \label{eq:initial}
  q(x,0)=V(x).
  \end{equation}
The first application of the Lax pair representation was to rapidly decaying initial data via the inverse scattering transform of Gelfand--Levitan--Marchenko. Other classical applications were to periodic and  finite gap quasiperiodic initial data; in those applications, solutions can be parametrized by Dirichlet data, which evolve by Dubrovin-type flows, or by an Abel map which linearizes the trajectories on a torus.
In particular, the finite gap quasiperiodic solutions are described by an algebro-geometric approach associated
to a compact Riemann surface. The solutions are quasiperiodic in both space and time because they can be expressed in the form $q(x,t) = F(\delta x + \zeta t)$ where $F:\bbT^d \to \bbR$ is continuous and $\delta,\zeta \in \bbR^d$, where $d< \infty$ is the number of gaps.

These theories motivate a more general consideration of the KdV hierarchy with almost periodic initial data. Recall that a function $f:\bbR \to \bbC$ is called almost periodic if the set of its translates $\{f(\cdot - s) \mid s \in\bbR\}$ is precompact in the uniform norm; this includes continuous periodic and quasiperiodic functions as special cases. In the study of the KdV hierarchy with almost periodic initial data, one of the new difficulties is in the nature of the conserved quantities which take the form of spatial averages over $\bbR$ such as $\lim_{L\to\infty} \frac 1{2L} \int_{-L}^L q(x,t)^2 \,dx$, which are not useful for obtaining local control over the solution. Indeed, even short time existence of solutions is not known for arbitrary almost periodic initial data. From a spectral/scattering perspective, the difficulty comes from the fact that almost periodic Schr\"odinger operators have very diverse spectral properties.

Despite these obstacles, the KdV hierarchy with almost periodic initial data is an active area of research \cite{Eg1,Eg2,Ts12,DG2,BDGL18,EVY18}, motivated by a question of Deift \cite{De,De2} about whether such solutions are almost periodic in time. Most of these works are based on the reflectionless property and on spectral theoretic techniques initially developed in the time-independent setting (i.e. for the translation flow) \cite{C89,SY1,SY2}. Analogous questions have also been studied in the setting of the nonlinear Schr\"odinger equation and the Toda lattice \cite{Oh,BdME,VY,BDLV}.

To get more precise, we must recall some basic facts about Schr\"odinger operators, starting with a time-independent setting. We denote by $\AC_\loc(\bbR)$ the set of functions on $\bbR$ which are absolutely continuous on every compact interval in $\bbR$; such functions have a derivative in $L^1_\loc(\bbR)$. We can describe in those terms the familiar Sobolev space
\[
W^{2,2}(\bbR) = \{ f \in L^2(\bbR) \mid f \in \AC_\loc(\bbR), f'\in \AC_\loc(\bbR), f'' \in L^2(\bbR) \}.
\]
For a bounded function $W: \bbR \to \bbR$, we consider the Schr\"odinger operator
  \begin{align}
  \label{eq:Schrod}
  H_{W}=-\frac{\dd^2}{\dd x^2}+W(x),
  \end{align}
which is an unbounded self-adjoint operator on $L^2(\bbR)$ with the domain $W^{2,2}(\bbR)$.
In particular, $\sigma(H_W) \subset \bbR$. For any $z\in \bbC \setminus \sigma(H_W)$, the resolvent operator $(H_W - z)^{-1}$ exists and is an integral operator; its integral kernel is called the Green's function and denoted $G(x,y;z,W)$. Of particular interest is the diagonal Green's function, obtained by setting $y=x$.
This is a Herglotz function (an analytic function which maps $\bbC_+$ to itself)
so it has nontangential boundary values Lebesgue-a.e. on $\bbR$. The Schr\"odinger operator $H_W$ is called reflectionless if
\begin{align} \label{eq:reflectionlessness}
 \Re G(x,x;\lambda+i0, W)=0,\;\;\text{for Lebesgue a.e.}\;\;\lambda\in\sigma(H_W).
 \end{align}
Let us denote by $\cR(S)$ the set of bounded
potentials $W$ such that $H_W$ is reflectionless and $\sigma(H_W) = S$;
we make $\cR(S)$ a metric space with the $L^\infty$-metric.

Our first result is that under a boundedness assumption on solutions to KdV-$n$, the reflectionless property is preserved in time; this generalizes or extends previous results for the KdV equation in \cite{R08,Re,BDGL18}.

\begin{prop}
\label{reflectionlessness}
Let $q(x,t)$ be a classical solution to the Cauchy problem \eqref{eq:kdvn}, \eqref{eq:initial} on $x\in \bbR$, $t\in [0,T]$
for some $T < \infty$,
obeying the boundedness condition
\begin{equation}\label{21jul3}
q,\partial_{x}^{2n}q\in L^\infty(\R\times [0,T]).
\end{equation}
Let $S = \sigma(H_V)$. If $V\in \cR(S)$, then $q(\cdot,t)\in \cR(S)$ for all $t\in [0,T]$.
\end{prop}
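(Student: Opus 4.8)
The plan is to view the KdV-$n$ flow as an isospectral deformation governed by the Lax pair $\partial_t L = [P_n, L]$, where $L = L(t) = H_{q(\cdot,t)}$ is the Schr\"odinger operator \eqref{eq:Schrod} and $P_n$ is the skew-adjoint differential operator of order $2n+1$ with differential-polynomial coefficients in $q$ for which $[P_n,L]$ reduces to multiplication by $2\partial_x \hat f_{n+1}$, so that the Lax equation is precisely \eqref{eq:kdvn}. The object I would follow along the flow is the diagonal Green's function $g(x,z,t) = G(x,x;z,q(\cdot,t))$, whose boundary values on the real axis encode both the spectrum and the reflectionless condition \eqref{eq:reflectionlessness}. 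As a preliminary reduction, I would use the boundedness hypothesis \eqref{21jul3}: combined with the Landau--Kolmogorov interpolation inequalities, $q,\partial_x^{2n}q\in L^\infty(\R\times[0,T])$ forces $\partial_x^j q\in L^\infty(\R\times[0,T])$ for all $0\le j\le 2n$, hence all coefficients of $P_n$ are bounded on $\R\times[0,T]$. This is the estimate that makes the formal manipulations below legitimate.

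Next I would derive the evolution of the Weyl solutions. For $z\in\C_+$ let $\psi_\pm(\cdot,z,t)$ be the solutions of $(L(t)-z)\psi=0$ lying in $L^2$ near $\pm\infty$, unique up to normalization by the limit-point property. Differentiating $(L-z)\psi_+=0$ in $t$ and inserting $\partial_t L=[P_n,L]$ gives $(L-z)(\partial_t\psi_+-P_n\psi_+)=0$; since the coefficients of $P_n$ are bounded and $\psi_+,\psi_+'$ decay at $+\infty$, reducing higher derivatives through the equation shows $P_n\psi_+$ is square-integrable near $+\infty$, so $\partial_t\psi_+-P_n\psi_+$ is an $L^2$-near-$+\infty$ solution and hence equals $\alpha_+(z,t)\psi_+$ for a scalar $\alpha_+$; symmetrically $\partial_t\psi_-=P_n\psi_-+\alpha_-\psi_-$. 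The scalars cancel in $g=\psi_+\psi_-/W(\psi_-,\psi_+)$, and extracting the diagonal of $\partial_t R=[P_n,R]$ for the resolvent $R=(L-z)^{-1}$ (equivalently computing $\partial_t g$ directly) would yield the transport equation
\[
 \partial_t g = F_n\,\partial_x g - (\partial_x F_n)\,g ,
\]
where $F_n(x,z,t)=\sum_{j=0}^n \hat f_{n-j}(x,t)\,z^{j}$ is a polynomial of degree $n$ in $z$ whose coefficients are real, bounded differential polynomials in $q$.

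With this transport equation in hand, both conclusions follow from its behavior on the boundary $z=\lambda+i0$. For fixed $x,t$ the function $z\mapsto g(x,z,t)$ is Herglotz (as $q(\cdot,t)$ is bounded and real), so nontangential boundary values exist a.e.; because $F_n(x,\lambda,t)$ is real for real $\lambda$, the real and imaginary parts of $g(x,\lambda+i0,t)$ each satisfy the same scalar linear transport equation $\partial_t u=F_n\partial_x u-(\partial_x F_n)u$. Solving along characteristics $\dot x=-F_n(x,\lambda,t)$ gives $u(x(t))=u(x(0))\exp\!\big(-\int_0^t \partial_x F_n\,ds\big)$; since the characteristic flow is a bijection of $\R$ and the exponential factor is positive, the set where $\Im g(\cdot,\lambda+i0,\cdot)>0$ --- the essential support of the absolutely continuous spectrum --- is preserved in time, and the set where $\Re g=0$ is preserved as well. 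Thus for a.e.\ $\lambda\in S$ one still has $\Re g(x,\lambda+i0,t)=0$, which is \eqref{eq:reflectionlessness} for $q(\cdot,t)$. For $z$ in $\R\setminus S$, a resolvent point of $L(0)$, the Weyl solutions remain linearly independent because $W(\psi_-,\psi_+)$ obeys a linear homogeneous ODE in $t$ and so cannot vanish, keeping $z$ in the resolvent set; combined with the preserved a.c.\ essential support this gives $\sigma(H_{q(\cdot,t)})=S$. Hence $q(\cdot,t)\in\cR(S)$ for all $t\in[0,T]$.

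I expect the main obstacle to lie not in this algebra but in justifying the Lax-pair evolution rigorously for merely bounded potentials: showing that $\psi_\pm(\cdot,z,t)$ is genuinely $t$-differentiable with the stated derivative, that $P_n\psi_\pm$ really decays in $L^2$ near $\pm\infty$ so the limit-point identification of $\alpha_\pm$ is valid, that $W(\psi_-,\psi_+)$ indeed satisfies a linear $t$-ODE, and that the interior transport equation for $g$ passes to the boundary values $\lambda+i0$ with enough uniformity to run the characteristic (Gronwall) argument. Each of these is controlled precisely by the boundedness hypothesis \eqref{21jul3} and the interpolation bounds on all intermediate derivatives it provides; without such control neither the coefficients of $P_n$ nor the decay of $P_n\psi_\pm$ could be bounded, and the whole scheme would break down.
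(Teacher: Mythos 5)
Your proposal has the same skeleton as the paper's proof: evolve the Weyl solutions by the Lax/zero-curvature flow, derive an evolution equation for the diagonal Green's function, and conclude that the spectrum and the reflectionless property are preserved. Indeed, your transport equation $\partial_t g = \hat F_n\,\partial_x g - (\partial_x \hat F_n)\,g$ is exactly the $(1,1)$ entry of the paper's $M$-matrix evolution $\partial_t M = PM + MP^\top$, since $P_{11} = -\tfrac12 \partial_x \hat F_n$, $P_{12} = \hat F_n$, and $\partial_x g = 2m_3$ is itself an entry of $M$; your interpolation remark is likewise the paper's device for bounding $\sup_{x,t}\lVert P(x,t)\rVert$ from \eqref{21jul3}. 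But the items you defer at the end as ``the main obstacle'' are not peripheral technicalities -- they are the actual content of the proof -- and two of them are set up in a way that would be hard to repair. First, you differentiate the Weyl solution in $t$, which presupposes that some normalization of $\psi_\pm(\cdot,z,t)$ is $t$-differentiable; this is never established, and is not obvious since $\psi_\pm$ is only defined up to a $t$-dependent scalar. The paper goes in the opposite direction: it \emph{defines} $\nu_\pm(x,t)$ as the solution of the joint linear system $\partial_x \nu = Q\nu$, $\partial_t \nu = P\nu$ (existence from the zero-curvature condition and commuting $C^1$ flows, valid at classical-solution regularity), and then proves a posteriori that $\nu_\pm(\cdot,t)$ is a Weyl solution for $H_{q(\cdot,t)}$. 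That identification is a Gronwall estimate, $\lvert \partial_t \lVert \nu_\pm\rVert^2\rvert \le 2\lVert P\rVert\, \lVert\nu_\pm\rVert^2$ with $\sup\lVert P\rVert<\infty$ by \eqref{21jul3}, giving two-sided bounds on $\int_0^{\pm\infty}\lVert\nu_\pm\rVert^2\,dx$ and hence both nontriviality and square-integrability at time $t$. Your proposal contains no substitute for this estimate, yet relies on its conclusion (for instance when you assert that the evolved $\psi_\pm$ are still Weyl solutions, so that a nonvanishing Wronskian keeps $z$ in the resolvent set).

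Second, your boundary-value step runs characteristics for a transport PDE in $(x,t)$ applied to the a.e.-defined boundary values $g(x,\lambda+i0,t)$; this needs joint regularity of those boundary values in $(x,t)$ (and a meaning for $\partial_x g$ at $\lambda+i0$) that you have not justified, and the a.e.\ exceptional set a priori depends on $(x,t)$. The paper sidesteps this entirely: because $\partial_x g = 2m_3$ is an entry of $M$, the evolution is a \emph{closed linear ODE in $t$ at each fixed $x$} (no spatial derivatives of unknown quantities appear), so it extends to $z=\lambda+i0$ for a.e.\ $\lambda$ at fixed $x$ with no PDE regularity issues, and yields the multiplicative formula $R_\pm(x,t;\lambda) = R_\pm(x,0;\lambda)\exp\{2i\int_0^t \Im(m_\pm)P_{12}\,ds\}$, from which $R_\pm\equiv 0$ at $t=0$ propagates immediately; similarly the spectrum invariance comes from solving this ODE for $M(0,t;z)$ and using the analytic-continuation characterization of $\sigma(H)$, rather than your Wronskian-plus-essential-support argument. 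In short: right strategy and correct key formula, but the load-bearing steps -- existence-then-identification of the evolved Weyl solutions via Gronwall, and reduction of the boundary-value evolution to a fixed-$x$ ODE -- are missing, and they are precisely where the hypothesis \eqref{21jul3} does its work.
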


In the above, we consider classical solutions $q$ of \eqref{eq:kdvn},
i.e., solutions such that $q$ is $2n+1$ times differentiable in $x$,
$\partial_x^{2n+1} q$ is jointly continuous in $x$ and $t$, $q$ is differentiable in $t$ and \eqref{eq:kdvn} holds pointwise.

Periodic potentials are usually treated through a well-posedness theory in Sobolev spaces $H^\beta(\bbT)$, and described in terms of a nonlinear evolution map $\cS_t:H^\beta(\bbT) \to H^\beta(\bbT)$ which is a homeomorphism for each $t$ and generates solutions $q(\cdot,t) = \cS_t(V)$ of KdV-$n$. Built into that formulation is the a priori assumption that solutions $q(x,t)$ stay periodic in $x$ for all $t$. In contrast, Prop.~\ref{reflectionlessness} allows us to consider locally bounded solutions and actually recover spatial periodicity for periodic intial data. Indeed, we have the following corollary as a consequence of Prop.~\ref{reflectionlessness}.

\begin{cor}
  \label{periodicity}
  Let $q(x,t)$ be a classical solution to the Cauchy problem \eqref{eq:kdvn}, \eqref{eq:initial} on $x\in \bbR$, $t\in [0,T]$
  for some $T < \infty$,
  obeying the boundedness condition \eqref{21jul3}.
  Suppose that the initial data satisfies $V(x+\ell)=V(x)$ for an $\ell > 0$ and all $x\in \R$.
  Then, for all $t\in [0,T]$, $x\in \R$,
  $q(x+\ell,t)=q(x,t)$ and $q(\cdot, t) = \cS_t(V)$.
\end{cor}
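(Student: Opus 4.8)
The plan is to use Proposition~\ref{reflectionlessness} to confine the solution to the isospectral class $\cR(S)$ and then to identify it, through its Dirichlet data, with the standard periodic evolution. First I would record that a periodic potential is reflectionless: by Floquet--Bloch theory $H_V$ has purely absolutely continuous spectrum $S$, a union of bands, and on the interior of each band the two Bloch solutions are complex conjugates, so the diagonal Green's function $G(x,x;\lambda+i0,V)$ is purely imaginary; hence $\Re G(x,x;\lambda+i0,V)=0$ a.e.\ on $S$, i.e.\ $V\in\cR(S)$ with $S=\sigma(H_V)$. Proposition~\ref{reflectionlessness} then gives $q(\cdot,t)\in\cR(S)$ for all $t\in[0,T]$. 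On the other hand, since $V$ is periodic and smooth enough to serve as the initial data of a classical solution, the classical well-posedness theory of KdV-$n$ on $\bbT=\bbR/\ell\bbZ$ produces $r(\cdot,t):=\cS_t(V)$, which is $\ell$-periodic, jointly smooth, and bounded with all spatial derivatives uniformly on $[0,T]$; thus $r$ is itself a classical solution of \eqref{eq:kdvn} obeying \eqref{21jul3}, and Proposition~\ref{reflectionlessness} gives $r(\cdot,t)\in\cR(S)$ as well. It therefore suffices to prove $q=r$, since $r$ is $\ell$-periodic and this already yields $q(x+\ell,t)=q(x,t)$.

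Next I would exploit that an element of $\cR(S)$ is reconstructed from its Dirichlet data in the gaps of $S$ by the trace formula, and that along a classical solution of KdV-$n$ these data evolve by a Dubrovin-type flow in $t$. The Dirichlet data of $q(\cdot,t)$ and of $r(\cdot,t)$ agree at $t=0$, since both potentials equal $V$, and they solve the same Dubrovin system; so if that system is uniquely solvable the two families of Dirichlet data coincide for all $t$, whence $q=r$ by the trace formula. An equivalent and instructive formulation is the translation argument: $\tilde q(x,t):=q(x+\ell,t)$ is again a classical solution of \eqref{eq:kdvn} satisfying \eqref{21jul3}, lies in $\cR(S)$, and has initial data $V(\cdot+\ell)=V$; the same uniqueness forces $\tilde q=q$, which is exactly periodicity. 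Structurally, the reason periodicity should persist is that the exact $x$-translation flow commutes with the KdV-$n$ time flow on the isospectral torus, so the locus of $\ell$-periodic configurations is invariant in time.

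The main obstacle is precisely the single-valuedness (uniqueness) of the Dubrovin-type flow, which may fail to be Lipschitz where Dirichlet eigenvalues collide with band edges; controlling this is the role of Craig-type conditions in the general almost periodic setting. For periodic $S$, which may have infinitely many gaps violating such conditions, I would avoid them in two complementary ways. First, the presence of the smooth $\ell$-periodic reference trajectory $r$ supplies explicit control of the Dirichlet data near the band edges, which I expect to yield the needed one-sided Lipschitz bound directly from the periodic band structure. Second, and more robustly, once periodicity of $q(\cdot,t)$ has been extracted, the problem reduces to the compact torus $\bbT=\bbR/\ell\bbZ$, where uniqueness for the periodic Cauchy problem of KdV-$n$ is unconditional, requiring only well-posedness in $H^\beta(\bbT)$ and no spectral hypothesis, and this identifies $q(\cdot,t)$ with $\cS_t(V)$. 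The delicate point is to organize the argument so that the infinite-gap Dubrovin analysis is reduced to this finite uniqueness statement on the circle.
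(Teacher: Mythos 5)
Your argument funnels everything through one step: uniqueness of the evolution inside the reflectionless class, which you propose to obtain from unique solvability of the Dubrovin-type system in $t$ (both in the comparison $q=r$ with $r(\cdot,t)=\cS_t(V)$ and in the translation trick $\tilde q(x,t)=q(x+\ell,t)$). That step is precisely what is not available here, as you yourself flag: the Lipschitz property of the Dubrovin vector field is what the Craig-type conditions \eqref{eq:Craig1}--\eqref{eq:Craig4} are for, and a periodic $V$ that is merely smooth enough to launch a classical KdV-$n$ solution need not satisfy them --- these conditions demand quantitative gap decay (e.g.\ \eqref{eq:Craig4} forces $\gamma_j$ to decay much faster than any decay guaranteed by $C^{2n+1}$ regularity, since $\eta_{j,0}\sim j^2$ for periodic spectra). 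The whole point of Corollary~\ref{periodicity} is that it holds without such conditions. Your two remedies do not repair this: the first (a one-sided Lipschitz bound near band edges from the periodic band structure) is stated as an expectation rather than an argument, and no such bound is established anywhere; the second is circular, because reducing to the compact torus requires already knowing that $q(\cdot,t)$ is $\ell$-periodic, which is the claim to be proved.

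The paper closes this gap by a static, fixed-time inverse-spectral argument that involves no $t$-dynamics at all. Fix $t$: by Corollary~\ref{cor:timeinvariance} and Proposition~\ref{reflectionlessness}, $q(\cdot,t)\in\cR(S)$ where $S$ is the periodic spectrum, and it has some Dirichlet data in $\cD(S)$. By Marchenko--Ostrowskii \cite{MO75}, \emph{every} point of $\cD(S)$ is realized as the Dirichlet data of some $\ell$-periodic potential $Q(\cdot,t)$, which is automatically reflectionless; by the Sodin--Yuditskii bijection between $\cR(S)$ and $\cD(S)$ \cite{SY2}, a reflectionless potential with spectrum $S$ is uniquely determined by its Dirichlet data. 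Hence $q(\cdot,t)=Q(\cdot,t)$, so $q(\cdot,t)$ is $\ell$-periodic for each $t$ separately --- injectivity of the map $W\mapsto$ Dirichlet data on $\cR(S)$ replaces ODE uniqueness for the flow. Only after periodicity is in hand does one invoke the torus theory to conclude $q(\cdot,t)=\cS_t(V)$, which is exactly the step you wanted to use but which only becomes available at the end.
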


Note that since our result is about uniqueness, finiteness of $T$ in the previous theorem should be viewed as a strength, not a weakness;  it immediately implies the following result for global solutions.

\begin{cor}
  Let $q(x,t)$ be a classical solution to the Cauchy problem \eqref{eq:kdvn}, \eqref{eq:initial} on $x\in \bbR$, $t \in [0, \infty)$,
  obeying the boundedness condition \eqref{21jul3} for all $T < \infty$.
  Suppose that the initial data satisfies $V(x+\ell)=V(x)$ for an $\ell> 0$
  and all $x\in \R$.
  Then, for all $t\in [0,\infty)$, $x\in \R$,
  $q(x+\ell,t)=q(x,t)$ and $q(\cdot, t) = \cS_t(V)$.
\end{cor}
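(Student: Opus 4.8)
The plan is to deduce this global statement directly from the finite-time Corollary~\ref{periodicity} by a restriction argument in the time variable, exactly as the preceding remark suggests. I would fix an arbitrary $t_0 \in [0,\infty)$ and choose any finite $T$ with $t_0 \le T$, and then consider the restriction $q|_{\bbR \times [0,T]}$, checking that it satisfies all the hypotheses of Corollary~\ref{periodicity} on the finite interval $[0,T]$.

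The points to verify are routine. Since $q$ is by assumption a classical solution of \eqref{eq:kdvn} on all of $\bbR \times [0,\infty)$, its restriction is a classical solution on $\bbR \times [0,T]$: the required differentiability in $x$ and $t$, the joint continuity of $\partial_x^{2n+1} q$, and the pointwise validity of the equation are all inherited immediately on the smaller domain. The boundedness condition \eqref{21jul3} is assumed to hold for every finite $T$, so in particular it holds for the chosen $T$. Finally, the initial data $V$ is unchanged by the restriction and remains $\ell$-periodic. Thus all hypotheses of Corollary~\ref{periodicity} are met.

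Applying Corollary~\ref{periodicity} on $[0,T]$ then yields $q(x+\ell,t) = q(x,t)$ and $q(\cdot,t) = \cS_t(V)$ for all $x \in \bbR$ and all $t \in [0,T]$; in particular these hold at $t = t_0$. Since $t_0 \in [0,\infty)$ was arbitrary, the conclusions hold for all $t \in [0,\infty)$, as claimed. I expect no genuine obstacle here: the finiteness of $T$ in Corollary~\ref{periodicity} is not a real restriction, precisely because both spatial periodicity and the identity $q(\cdot,t) = \cS_t(V)$ are pointwise-in-$t$ statements, so exhausting $[0,\infty)$ by finite intervals (or simply taking $T$ larger than the given $t_0$) suffices.
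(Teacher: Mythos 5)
Your proposal is correct and is exactly the argument the paper intends: the paper treats this corollary as an immediate consequence of Corollary~\ref{periodicity}, obtained by restricting the global solution to $\bbR\times[0,T]$ for arbitrarily large finite $T$ and noting that all hypotheses are inherited. Your verification of the restriction's hypotheses and the pointwise-in-$t$ nature of the conclusions is precisely the (implicit) content of the paper's remark that finiteness of $T$ is a strength, not a weakness.
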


After these results for the periodic problem,
we will return to the more general setting, inspired by the
regime of almost periodic initial data. We recall an important
link between the reflectionless property and almost periodicity: by Kotani theory \cite{Kotani}, if $W$ is almost periodic and its absolutely continuous spectrum is $\sigma_\ac(H_W) = \sigma(H_W)$, then $H_W$ is reflectionless.
We will use this in an application of our results in the final section of the paper.

In the nonperiodic setting, our work will use certain thickness conditions on the spectrum. By general principles, $S = \sigma(H_W)$ is a closed set bounded below but not bounded above, so it can be written in the form
\begin{equation}\label{eqn:setS}
S = [ \underline{E}, \infty) \setminus \cup_{j\in J} (E_j^-, E_j^+),
\end{equation}
where $\underline{E} = \inf S$, $J$ is a countable indexing set and $(E_j^-, E_j^+)$ denote maximal open intervals in $ [ \underline{E}, \infty) \setminus S$ called gaps. The ``finite gap" solutions correspond to sets $S$ with finitely many gaps, and periodic potentials have a sequence of isolated gaps indexed by $n\in\bbN$ such that $E_n^\pm \to +\infty$ as $n\to\infty$; however, for general almost periodic initial data the set $S$ is generically a Cantor set and can even have zero Lebesgue measure \cite{Av,DFL17}. Many difficulties arise from the possible accumulation of gaps at finite points.

Using the observation that $G(x,x;z,W)$ is continuous and strictly increasing for $z\in (E_j^-,E_j^+)$, Craig~\cite{C89} introduced Dirichlet data for $W \in \cR(S)$ by
  \begin{align}
    \label{eq:Dirichleteig}
    \mu_j(x) =\begin{cases}
    z\in (E_j^-,E_j^+), & G(x,x;z,W)=0, \\
    E_j^+, & G(x,x;z,W)<0\;\;\forall z\in (E_j^-,E_j^+), \\
    E_j^-, & G(x,x;z,W)>0\;\;\forall z\in (E_j^-,E_j^+).
    \end{cases}
  \end{align}
If $\mu_j(x) \in (E_j^-, E_j^+)$, additional information is in the value of $\sigma_j(x) = - \partial_x G(x,x;z,W)\vert_{z=\mu_j(x)}  \in \{\pm 1\}$. Together, for fixed $x\in\bbR$, $\mu_j(x)$ and $\sigma_j(x)$ can be thought of as lying on a double cover of the interval $[E_j^-,E_j^+]$ with gap edges identified, and they can be combined into an angular variable $\varphi_j(x) \in \bbR / 2\pi \bbZ$ defined by the conditions
  \begin{align}
    \label{eq:coordinates1}
    \mu_j(x) &=E_{j}^-+(E_j^+-E_j^-)\cos^2\left( \frac{\varphi_j(x)}{2}\right), \\
    \label{eq:coordinates2}
    \sigma_j(x) &= -\sgn \sin \varphi_j(x).
  \end{align}
We view this as a correspondence from $W \in \cR(S)$ to a trajectory $\varphi(x) = (\varphi_j(x))_{j\in J}$ on the ``torus of Dirichlet data" $\cD(S) := \bbT^J$.

In particular, the map $W \mapsto \varphi(0)$ is a map from $\cR(S)$ to $\cD(S)$, which will be a  homeomorphism in our regime. The recovery of $W$ from its Dirichlet data $\varphi(0)$
uses the behavior of Dirichlet data with respect to translation: it was proven
in \cite{C89,BDGL18}
that the translation flow is governed
by the ordinary differential equation
  \begin{equation}\label{eq:translationflow0}
    \partial_x\varphi(x)=\Psi(\varphi(x)),
  \end{equation}
  where $\Psi$ is the Dubrovin-type vector field on $\cD(S)$ with components
    \begin{align}
    \label{eq:Psidef}
    \Psi_j(\varphi)=2\sqrt{(\mu_j -\underline{E}) \prod_{\ell\ne j}\frac{(E_\ell^{-}-\mu_j)(E_{\ell}^+-\mu_j)}{(\mu_\ell -\mu_j)^2}}.
  \end{align}
Under suitable thickness assumptions on the spectrum, which will be discussed below, Craig \cite{C89} proved that $\Psi$ is Lipschitz, so the solution to the ODE \eqref{eq:translationflow0} is unique for any given initial data $\varphi(0) \in \cD(S)$. This trajectory $\varphi(x)$ determines the potential by \eqref{eq:coordinates1} and the trace formula
\begin{equation}\label{eq:trace9}
W(x) = \underline{E}+\sum_{j\in J}(E_j^{-} + E_j^{+}  - 2\mu_j(x) )
\end{equation}
which holds in great generality \cite{GHSZ}. In summary, the map from $\cR(S)$ to $\cD(S)$ given by $W \mapsto \varphi(0)$ is then a homeomorphism, with an explicit inverse given by solving the ODE \eqref{eq:translationflow0} and applying the trace formula \eqref{eq:trace9}.

For the set $S$ with gaps denoted as in \eqref{eqn:setS}, we denote
  \begin{align}
    \gamma_j&=E_j^{+}-E_{j}^{-} \nonumber, \\
    \eta_{j,l}&=\dist((E_j^{-},E_j^{+}),(E_l^{-},E_l^{+}))\nonumber, \\
    \eta_{j,0}&=\dist(\underline{E},(E_j^{-},E_j^{+}))\nonumber, \\
    C_j&=(\eta_{j,0}+\gamma_j)^{1/2}\prod_{\substack{l\in J\\\ell\ne j}}\left( 1+\frac{\gamma_l}{\eta_{j,l}} \right)^{1/2} \label{Cjdefn}.
  \end{align}
To study the KdV-$n$ equation, we fix $n$ and assume that $S$ obeys the moment condition
      \begin{align}
      \label{eq:Craig5}
      \sum_{k\in J}\gamma_{k}(1+\eta_{k,0}^n)<\infty,
    \end{align}
 and we set the metric on $\cD(S)$ given by
    \begin{align}
    \label{eq:metric}
    \norm{\varphi -\tilde{\varphi}}_{\mathcal{D}(S)}=\sup_{j\in J}\gamma_j^{1/2}(1+\eta_{j,0}^n)^{1/2}\norm{\varphi_j-\tilde{\varphi}_j}_{\mathbb{T}}
  \end{align}
which is consistent with the product topology. In particular,  the torus $\cD(S)$ is compact.

    We consider the scalar fields on $\cD(S)$
  \begin{align}
    \label{eq:higherordertrace}
  Q_k=\underline{E}^k + \sum_{j\in J} \left( (E_j^{-})^k+(E_j^{+})^k-2 \mu_j^k \right)
  \end{align}
and
  \begin{equation}\label{eq:Rmdefn}
  R_m = \sum_{\substack{\alpha\in \N_0^n\\\sum_{k=1}^m k\alpha_k=m}}\prod_{k=1}^m \frac{Q_k^{\alpha_k}}{\alpha_k!(2k)^{\alpha_k}}.
  \end{equation}
It will follow from \eqref{eq:Craig5} that $Q_k$ and thus $R_m$ are continuous scalar fields for all $k, m\le n$.

We also consider another Dubrovin-type vector field $\Xi$ with components
  \begin{align}
    \label{eq:Xidef}
    \Xi_j(\varphi)=\left( \sum_{\ell=0}^n R_{n-\ell}\mu_j^\ell \right)\Psi_j.
  \end{align}
We formulate conditions to ensure that the vector field $\Xi$ is Lipschitz on $\cD(S)$. These Craig-type conditions are as follows,
    \begin{align}
      \label{eq:Craig1}
      \sum_{k\in J}\gamma_{k}^{1/2}(1+\eta_{k,0}^n)^{1/2} <\infty,
    \end{align}
    \begin{align}
      \label{eq:Craig2}
      \sup_{j\in J}C_j(1+\eta_{j,0}^n)^{3/2}
      \sum_{k\ne j}\frac{\gamma_k^{1/2}\gamma_j^{1/2}}{\eta_{j,k}}<\infty,
    \end{align}
    \begin{align}
      \label{eq:Craig3}
      \sup_{j\in J}\frac{\gamma_j(1+\eta_{j,0}^n)C_j}{\eta_{j,0}}<\infty,
    \end{align}
    \begin{align}
      \label{eq:Craig4}
      \sup_{j\in J}C_j\gamma_{j}^{1/2}(1+\eta_{j,0}^{n})^{3/2}<\infty,
    \end{align}
with $C_j$ given by \eqref{Cjdefn}. Although spectral gaps can accumulate at a finite point,  these conditions provide quantitative bounds on the accumulation of gaps and serve as thickness conditions on the spectrum.  \eqref{eq:Craig1} is a kind of moment condition and it implies \eqref{eq:Craig5}.

As in the discussion surrounding \eqref{eq:translationflow0}, by \cite{C89,BDGL18}, the translation flow is governed by the following ordinary differential equation for fixed $t$,
  \begin{align}
    \label{eq:translationflow}
    \partial_x\varphi(x,t)=\Psi(\varphi(x,t)).
  \end{align}
  It was also proven in \cite{C89} that $\Psi$ is Lipschitz under weaker assumptions than
  \eqref{eq:Craig1}, \eqref{eq:Craig2}, \eqref{eq:Craig3}, and \eqref{eq:Craig4}.
  In this paper we will show that
  \begin{align}
    \label{eq:Kdvnflow}
    \partial_t\varphi(x,t)=\Xi(\varphi(x,t))
  \end{align}
  and that $\Xi$ is Lipschitz under assumptions
  \eqref{eq:Craig1}, \eqref{eq:Craig2}, \eqref{eq:Craig3}, and \eqref{eq:Craig4}.  Note that the Craig-type conditions \eqref{eq:Craig1}, \eqref{eq:Craig2}, \eqref{eq:Craig3}, \eqref{eq:Craig4} are inevitably stronger than those in \cite{C89} (which were designed to control the translation flow $\Psi$) or those in \cite{BDGL18} (designed to control the flow corresponding to the KdV equation, $n=1$).
 The Lipschitz property will imply that the two-parameter solution $\varphi(x,t)$ solving \eqref{eq:translationflow} and \eqref{eq:Kdvnflow} is uniquely determined by $\varphi(0,0)$, which is in turn uniquely determined by the initial data $V$. We prove the following:

  \begin{thm}
  \label{thm:Dubrovin}
  Let $q(x,t)$ be a classical solution to \eqref{eq:kdvn}, \eqref{eq:initial} on $x\in \bbR$, $t\in [0,T]$ for some $T < \infty$, with
  initial data $V$ such that $H_V$ is reflectionless and the spectrum
  $S = \sigma(H_V)$ obeys  \eqref{eq:Craig1}, \eqref{eq:Craig2}, \eqref{eq:Craig3}, \eqref{eq:Craig4}. We also assume
  $q$ obeys \eqref{21jul3}.
   Then, for all $t\in [0,T]$,
   \begin{enumerate}[(a)]
   \item The space and time evolution of Dirichlet data $\varphi = (\varphi_j)_{j\in J}$ corresponding to $q(x, t)$ is uniquely determined from $\varphi(0,0)$ as the solution of the flows \eqref{eq:translationflow}, \eqref{eq:Kdvnflow} with respect to the Lipshitz vector fields $\Psi$ and $\Xi$.
\item The solution $q(x,t)$ obeys
  \begin{equation}\label{21jul4}
    q(x,t)=\underline{E}+\sum_{j\in J} \left( E_{j}^++E_{j}^--2\mu_j(x,t) \right)
  \end{equation}
  where $\sigma(H_V)=[\underline{E},\infty)\setminus \bigcup_{j\in J}(E_j^-,E_j^+)$ and
  $\mu_j(x,t)$ is the Dirichlet data.
\item In particular, $q$ is uniquely determined by \eqref{eq:Kdvnflow} and \eqref{21jul4}.
\end{enumerate}
  \end{thm}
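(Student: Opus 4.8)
The plan is to deduce the theorem in three stages: reduce everything to statements about reflectionless potentials with the fixed spectrum $S$, derive the two Dubrovin-type flows, and then use the Lipschitz property to extract uniqueness.

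\emph{Stage 1 (reduction).} By Proposition~\ref{reflectionlessness}, the standing hypotheses (classical solution obeying \eqref{21jul3}, initial data $V\in\cR(S)$) force $q(\cdot,t)\in\cR(S)$ for every $t\in[0,T]$. Hence, for each fixed $t$, the diagonal Green's function $g(x,z,t):=G(x,x;z,q(\cdot,t))$ is defined and Herglotz in $z$, and the Dirichlet data $\mu_j(x,t),\sigma_j(x,t)$ together with the angles $\varphi_j(x,t)$ are well defined by \eqref{eq:Dirichleteig}--\eqref{eq:coordinates2}. Part (b) is then immediate: the trace formula \eqref{21jul4} is exactly \eqref{eq:trace9} applied to $W=q(\cdot,t)$. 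Likewise, for each fixed $t$ the spatial flow \eqref{eq:translationflow} is the statement of \cite{C89,BDGL18} applied to the reflectionless potential $q(\cdot,t)$, which supplies the first of the two flows in part (a).

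\emph{Stage 2 (time flow).} The heart of the matter is \eqref{eq:Kdvnflow}. I would begin from the Lax pair $\partial_t L=[P_n,L]$ with $L=-\partial_x^2+q$ and derive from it the time evolution of $g$. Writing $g$ as (a normalization of) the product of the two Weyl solutions, whose $t$-evolution is dictated by $P_n$, the standard computation yields an equation of the form
\[
\partial_t g = B_n\,\partial_x g-(\partial_x B_n)\,g,
\]
where $B_n(x,z,t)$ is a polynomial in $z$ of degree $n$ whose coefficients are differential polynomials in $q$ built from the $\hat{f}_k$. Differentiating the defining relation $g(x,\mu_j(x,t),t)=0$ (in the regime $\mu_j\in(E_j^-,E_j^+)$) in $t$ and in $x$ and eliminating $\partial_z g$, the term $(\partial_x B_n)g$ drops at a zero of $g$ and $\partial_x\mu_j=\Psi_j$, giving
\[
\partial_t\mu_j = 2\,B_n(x,\mu_j,t)\,\Psi_j.
\]
It remains to identify $2B_n(x,\mu_j,t)$ with $\sum_{\ell=0}^n R_{n-\ell}\mu_j^\ell$. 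This is where \eqref{eq:higherordertrace} and \eqref{eq:Rmdefn} enter: the coefficients of $B_n$ are expressed through the $\hat{f}_k$; the $\hat{f}_k(x)$ are linked to the trace quantities $Q_k(\varphi(x))$ via the $z\to-\infty$ asymptotic expansion of $g$ and the trace formulas; and the generating-function identity $\sum_m R_m w^m=\exp\bigl(\sum_k \tfrac{Q_k}{2k}w^k\bigr)$, which is precisely the combinatorial content of \eqref{eq:Rmdefn}, converts the $Q_k$ into the coefficients appearing in $B_n$. Translating $\partial_t\mu_j$ and $\partial_t\sigma_j$ into $\partial_t\varphi_j$ through \eqref{eq:coordinates1}--\eqref{eq:coordinates2} then yields $\partial_t\varphi_j=\Xi_j(\varphi)$ with $\Xi_j$ as in \eqref{eq:Xidef}, proving \eqref{eq:Kdvnflow}; the angular variable is what renders the right-hand side smooth across gap edges.

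\emph{Stage 3 (uniqueness).} Both vector fields are Lipschitz on the compact metric torus $\cD(S)$ — $\Psi$ by \cite{C89} and $\Xi$ under the Craig-type conditions \eqref{eq:Craig1}--\eqref{eq:Craig4} established elsewhere in this paper — so the associated ODEs have unique solutions. Since $V\in\cR(S)$ determines $\varphi(0,0)$ through the homeomorphism $\cR(S)\to\cD(S)$, the map $t\mapsto\varphi(0,t)$ is the unique solution of $\partial_t\psi=\Xi(\psi)$ with $\psi(0)=\varphi(0,0)$, and then $x\mapsto\varphi(x,t)$ is the unique solution of $\partial_x\varphi=\Psi(\varphi)$ with that initial value; thus $\varphi(x,t)$ is uniquely determined, giving (a), and the trace formula \eqref{21jul4} recovers $q(x,t)$ uniquely, giving (c). The main obstacle is Stage~2: rigorously deriving the Green's-function evolution and the time-Dubrovin equation for general reflectionless potentials with possibly Cantor spectrum using only the boundedness \eqref{21jul3} rather than decay or extra smoothness; justifying differentiability of $\mu_j$ in $t$ and the interchange of limits and sums needed to pass from the local differential-polynomial coefficients of $B_n$ to the globally defined $Q_k$ and $R_m$, with convergence controlled by \eqref{eq:Craig5} and \eqref{eq:Craig1}; and treating the gap-edge cases $\mu_j\in\{E_j^\pm\}$, where the angular coordinate $\varphi_j$ is essential to keep the flow well defined and Lipschitz.
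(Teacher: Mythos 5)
Your proposal follows essentially the same route as the paper: reduce via the preservation of reflectionlessness and spectrum invariance, derive the time-Dubrovin flow from the Lax-pair evolution of Weyl solutions evaluated at zeros of the diagonal Green's function (your equation $\partial_t g = B_n\partial_x g - (\partial_x B_n)g$ with $B_n=\hat F_n$ is exactly the paper's $M$-matrix evolution $\partial_t m_1 = 2P_{11}m_1+2P_{12}m_3$, since $m_3=\tfrac12\partial_x G$), identify the coefficients with the $R_m$ via the exponential-Herglotz/trace-formula expansion (the paper's Lemma~\ref{lem:compfn}), and conclude by the Lipschitz property of $\Xi$ and ODE uniqueness plus the trace formula. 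The technical points you flag as obstacles (Weyl-solution evolution under only \eqref{21jul3}, differentiability of $\mu_j$, gap edges) are precisely the ones the paper resolves in Sections 2--4.
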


Again, we have the following result for global solutions.
\begin{cor}
  Let $q(x,t)$ be a classical solution to \eqref{eq:kdvn}, \eqref{eq:initial} on $x\in \bbR$, $t\in [0,\infty)$, with
  initial data $V$ such that $H_V$ is reflectionless and the spectrum $S = \sigma(H_V)$ obeys \eqref{eq:Craig1}, \eqref{eq:Craig2}, \eqref{eq:Craig3}, and \eqref{eq:Craig4}.
  If $q$ obeys the boundedness condition \eqref{21jul3} for all $T <\infty$, then the conclusions of the previous theorem apply to $q$ for all $t \in [0,\infty)$.
\end{cor}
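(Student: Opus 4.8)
The plan is to deduce this global statement directly from Theorem~\ref{thm:Dubrovin} by exhausting the half-line $[0,\infty)$ with finite intervals $[0,T]$, exploiting the fact that both the hypotheses and the conclusions of that theorem are \emph{local in time}. This is exactly the point flagged in the text preceding Corollary~\ref{periodicity}: because the statement is about uniqueness, the finiteness of $T$ is a feature, and a global solution is handled one finite window at a time.

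First I would fix an arbitrary $t_0\in[0,\infty)$ and choose any finite $T>t_0$. The restriction $q|_{\bbR\times[0,T]}$ is again a classical solution of \eqref{eq:kdvn} with the same initial data \eqref{eq:initial}, since the defining regularity ($q$ being $2n+1$ times differentiable in $x$, joint continuity of $\partial_x^{2n+1}q$, differentiability in $t$) and the pointwise validity of the equation are all inherited under restriction of the time variable. The spectral data $S=\sigma(H_V)$ and the reflectionlessness of $H_V$ depend only on $V$ and are therefore unchanged; in particular the Craig-type conditions \eqref{eq:Craig1}, \eqref{eq:Craig2}, \eqref{eq:Craig3}, \eqref{eq:Craig4} continue to hold. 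Finally, the boundedness condition \eqref{21jul3} on $\bbR\times[0,T]$ is precisely the standing assumption, which is posited for all $T<\infty$. Thus every hypothesis of Theorem~\ref{thm:Dubrovin} is met on $[0,T]$, and its conclusions (a)--(c) hold for all $t\in[0,T]$, in particular at $t=t_0$.

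Since $t_0$ was arbitrary, this yields the conclusions for all $t\in[0,\infty)$, provided the characterization obtained is independent of the auxiliary choice of $T$. This is the only point deserving a remark, and it is immediate: the flows \eqref{eq:translationflow}, \eqref{eq:Kdvnflow}, the Lipschitz vector fields $\Psi$ and $\Xi$, and the trace formula \eqref{21jul4} are all defined with no reference to $T$, and the initial value $\varphi(0,0)$ is determined by $V$ alone. Hence the values $\varphi(\cdot,t_0)$ and $q(\cdot,t_0)$ produced by applying the theorem on any interval $[0,T]$ with $T>t_0$ coincide, and the flows extend consistently to a single solution on $\bbR\times[0,\infty)$. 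I do not expect a genuine obstacle here; all the analytic content is carried by Theorem~\ref{thm:Dubrovin}, and the argument is purely a localization-in-time bookkeeping.
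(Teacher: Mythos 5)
Your proposal is correct and matches the paper's (implicit) argument: the paper states this corollary without proof, treating it as immediate from Theorem~\ref{thm:Dubrovin} precisely because the hypotheses on any finite window $[0,T]$ are satisfied and uniqueness on every such window gives uniqueness for all $t\in[0,\infty)$. Your write-up simply makes this localization-in-time bookkeeping explicit, including the (easy but worth noting) consistency of the conclusions across different choices of $T$.
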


Analogous results hold for negative time.

It should be noted that Theorem~\ref{thm:Dubrovin} doesn't assume
that our solution stays almost periodic in $x$; instead, this follows as a consequence. In this way, in addition to saying something new
for the periodic case as we highlighted in Corollary~\ref{periodicity}, we also see that our results apply to
the finite gap quasiperiodic case (where the Craig-type
conditions are trivially satisfied).
The theorem also applies to a class of small quasiperiodic
initial data which we will describe below.

An earlier uniqueness result for the KdV equation was proved by Binder--Damanik--Goldstein--Lukic \cite{BDGL18} and is based on work of Rybkin \cite{R08} on the time evolution of Weyl solutions, $m$-functions and $M$-matrices under the KdV equation. Our paper can be viewed as a generalization of these results to the entire KdV hierarchy. Moreover, through a more careful analysis of the time evolution of eigensolutions, our results improve those in \cite{R08,BDGL18} even for the KdV equation. Where earlier results for the KdV equation require $q, \partial_x^3 q \in L^\infty(\bbR \times [0,T])$, our Theorem~\ref{thm:Dubrovin} for $n=1$ only requires $q, \partial_x^2 q \in L^\infty(\bbR \times [0,T])$.

The paper \cite{BDGL18} also proved existence and almost periodicity of solutions to the KdV equation for a class of reflectionless initial data with Craig-type conditions on the spectrum $S$; these results were generalized by Eichinger--VandenBoom--Yuditskii \cite{EVY18} to a more general class of $S$ (Widom sets with the Direct Cauchy Theorem property and a moment condition) and to the entire KdV hierarchy. Our uniqueness results can be viewed as complementary to those existence and almost periodicity results. Within the scope of applicability, our results show that the almost periodic solution constructed in \cite{EVY18} is the only locally bounded solution in the sense of \eqref{21jul3}.

Finally, we describe the application to small quasiperiodic initial data. Let $\epsilon>0$, $0\leq \kappa_0\leq 1$, and $\omega\in \R^\nu$ for some $\nu\in \N$.
We say $V\in \mathcal{P}(\omega,\epsilon,\kappa_0)$ if $V:\R\to \R$ is of the form
\begin{align}
  \label{eq:Vseries}
V(x)=\sum_{m\in \N^{\nu}}c(m)e^{2\pi i m\cdot \omega x}
\end{align}
where
\[|c(m)|\leq \epsilon\exp(-\kappa_0 |m|), \;\forall n\in \Z^{\nu}.
\]

All results will be in the small coupling regime, $\epsilon < \epsilon_0(a_0,b_0,\kappa_0)$.
The direct spectral theory of $H_V$ for $\epsilon$ has been studied extensively by
Eliasson~\cite{El} and Damanik--Goldstein~\cite{DG1},
with studies of the inverse spectral theory and the KdV equation in \cite{Ts12,DGL1,DGL2,DGL3,BDGL18}. Using that theory, we will be able to prove that Theorem~\ref{thm:Dubrovin} applies to initial data $V \in \cP(\omega,\epsilon,\kappa_0)$.

We now state our uniqueness theorem as applied to data of the form \eqref{eq:Vseries} above, with $\omega$ satisfying
a Diophantine condition,
\begin{align}
\label{eq:Diophantine}
|m \cdot \omega|\geq a_0|m|^{-b_0}
\end{align}
for some $0<a_0<1$ and $\nu<b_0<\infty$.
\begin{thm}
  \label{thm:app}
Let $\omega\in \R^\nu$ obey the Diophantine condition \eqref{eq:Diophantine} for some $0<a_0<1$ and $\nu<b_0<\infty$. There is an $\epsilon_0(a_0,b_0,\kappa_0)>0$ such that
if $\epsilon<\epsilon_0$, and $V\in \mathcal{P}(\omega,\epsilon,\kappa_0)$, then
any solution of \eqref{eq:kdvn}, \eqref{eq:initial}, with $q,\partial_x^{2n}q\in L^\infty(\R\times[0,T])$
is unique.
\end{thm}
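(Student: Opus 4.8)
The plan is to derive Theorem~\ref{thm:app} directly from Theorem~\ref{thm:Dubrovin}. Since the boundedness hypothesis \eqref{21jul3} is part of the assumptions, it suffices to verify, for every $V\in\mathcal{P}(\omega,\epsilon,\kappa_0)$ with $\omega$ Diophantine and $\epsilon<\epsilon_0$, the two structural hypotheses of Theorem~\ref{thm:Dubrovin}: first, that $H_V$ is reflectionless, i.e.\ $V\in\cR(S)$ with $S=\sigma(H_V)$; and second, that $S$ obeys the Craig-type conditions \eqref{eq:Craig1}, \eqref{eq:Craig2}, \eqref{eq:Craig3}, \eqref{eq:Craig4}. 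Once these are established, parts (a)--(c) of Theorem~\ref{thm:Dubrovin} apply and give that the solution is uniquely determined, which is exactly the assertion of Theorem~\ref{thm:app}. Thus the whole task is to import the small-coupling direct spectral theory of Eliasson~\cite{El} and Damanik--Goldstein~\cite{DG1} into the quantitative framework \eqref{eq:Craig1}--\eqref{eq:Craig4}.

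For reflectionlessness I would invoke pure absolute continuity of the spectrum: for a real-analytic quasiperiodic potential with Diophantine frequency and $\epsilon<\epsilon_0$, the operator $H_V$ has purely absolutely continuous spectrum, so in particular $\sigma_{\ac}(H_V)=\sigma(H_V)$. Since $V$ is quasiperiodic, hence almost periodic, Kotani theory~\cite{Kotani} (recalled in the introduction) then yields that $H_V$ is reflectionless, that is, $V\in\cR(S)$.

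For the Craig-type conditions the crucial input is the quantitative gap structure of $S$ from~\cite{DG1}: the gaps are labeled by resonances, which may be taken in $\{m\in\Z^\nu:m\cdot\omega>0\}$; the gap with label $j\leftrightarrow m$ is located near energy $\asymp(m\cdot\omega)^2$, so that $a_0^2|m|^{-2b_0}\lesssim\eta_{j,0}\lesssim|m|^2$, and it has exponentially small length $\gamma_j\lesssim\epsilon\,e^{-c|m|}$ for a constant $c>0$ comparable to $\kappa_0$. Gap separations are bounded below by writing $\eta_{j,l}\asymp|(m-m')\cdot\omega|\,((m+m')\cdot\omega)$, with $m,m'$ the resonances of $j,l$, and applying the Diophantine condition \eqref{eq:Diophantine} to the first factor. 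With these three ingredients---exponentially small $\gamma_j$, polynomially bounded $\eta_{j,0}$, and Diophantine separations---the verification of \eqref{eq:Craig1}, \eqref{eq:Craig2}, \eqref{eq:Craig3}, \eqref{eq:Craig4} proceeds essentially as in the case $n=1$ treated in~\cite{BDGL18}. The only structural difference for general $n$ is that $\eta_{j,0}$ enters through the higher powers $(1+\eta_{j,0}^n)$; but since $\eta_{j,0}$ grows at most polynomially in $|m|$ while each summand in each condition carries an exponentially small factor $\gamma_j^{1/2}$ or $\gamma_j$, these higher weights are harmless and every sum converges and every supremum is finite once $\epsilon$ is small.

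The main obstacle is the uniform (in $j$) control of the constants $C_j$ from \eqref{Cjdefn}, and of the inner separation sum $\sum_{k\ne j}\gamma_k^{1/2}/\eta_{j,k}$ in \eqref{eq:Craig2}, because spectral gaps accumulate near the bottom of $S$ and $\eta_{j,k}$ can be small. To control the product $\prod_{l\ne j}(1+\gamma_l/\eta_{j,l})^{1/2}$ in $C_j$, I would bound $\sum_{l\ne j}\gamma_l/\eta_{j,l}$ uniformly by splitting according to the size of the separation. When the resonance $m'$ of $l$ has $m'\cdot\omega$ bounded away from $m\cdot\omega$, one uses the \emph{true} size $\eta_{j,l}\gtrsim(m'\cdot\omega)^2\gtrsim a_0^2|m'|^{-2b_0}$ of the denominator, so that $\gamma_l/\eta_{j,l}\lesssim\epsilon\,e^{-c|m'|}|m'|^{2b_0}$ is summable; the weak Diophantine lower bound must \emph{not} be substituted here. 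A genuinely small $\eta_{j,l}$ can occur only when $|(m-m')\cdot\omega|$ is small, which by \eqref{eq:Diophantine} forces $|m-m'|$, hence $|m'|$, to be large, so that the exponentially small length $\gamma_l\lesssim\epsilon\,e^{-c|m'|}$ dominates the polynomially growing factor $\eta_{j,l}^{-1}$; the same dichotomy controls the sum in \eqref{eq:Craig2}. This shows the product in $C_j$ is uniformly bounded, whence $C_j$ grows at most polynomially in $|m|$ through its prefactor $(\eta_{j,0}+\gamma_j)^{1/2}$. After this, in each of \eqref{eq:Craig2}, \eqref{eq:Craig3}, \eqref{eq:Craig4} the exponentially small factor $\gamma_j^{1/2}$ or $\gamma_j$ present in every term dominates all remaining polynomial weights, so each supremum is finite, while \eqref{eq:Craig1} is immediate. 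With both hypotheses of Theorem~\ref{thm:Dubrovin} verified, that theorem applies and yields the uniqueness asserted in Theorem~\ref{thm:app}.
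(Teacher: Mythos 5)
Your overall route coincides with the paper's: reflectionlessness via the pure absolute continuity result of \cite{DG1} combined with Kotani theory \cite{Kotani}, then verification of \eqref{eq:Craig1}--\eqref{eq:Craig4} from the quantitative gap estimates of \cite{DGL3,BDGL18}, and finally an application of Theorem~\ref{thm:Dubrovin}. The genuine gap is in your control of $C_j$. Your dichotomy asserts that a genuinely small separation $\eta_{j,l}$ forces, via \eqref{eq:Diophantine}, first $|m-m'|$ and \emph{hence} $|m'|$ to be large, so that the exponential smallness of $\gamma_l$ compensates for the large factor $\eta_{j,l}^{-1}$. The second implication is false: if $|m|$ is large and $|m'|$ is small (say $|m'|=1$), then $|m-m'|$ is automatically large, and the Diophantine condition then permits $|(m-m')\cdot\omega|$, hence $\eta_{j,l}\approx|(m-m')\cdot\omega|\,|(m+m')\cdot\omega|$, to be polynomially small in $|m|$; for typical Diophantine $\omega$ such near-resonances occur for infinitely many $m$. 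In that situation $\gamma_l$ is exponentially small only in the \emph{fixed} index $|m'|$, i.e.\ it is a constant, so the single factor $1+\gamma_l/\eta_{j,l}$ can be of order $|m|^{b_0}$. Thus a high gap can sit polynomially close to a fixed low gap with no compensating smallness, the product $\prod_{l\ne j}\left(1+\gamma_l/\eta_{j,l}\right)^{1/2}$ is not uniformly bounded, and your claim that $C_j$ grows at most polynomially in $|m|$ is unjustified (and, for typical $\omega$, false).

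The good news is that the final assembly of your argument does not need the strong bound you claim. The bound actually available, which the paper simply cites from \cite{BDGL18} as \eqref{ineq:C_mlog}, is only quasi-polynomial, $C_m\le F\exp\left(F\log|m|\log\log|m|\right)$; its proof requires a more careful counting of near-resonant indices than your dichotomy provides (at each scale only few low indices $l$ can have their gap close to gap $j$, because the finitely many values $m'\cdot\omega$ with $|m'|$ small are separated from one another). Since in each of \eqref{eq:Craig2}, \eqref{eq:Craig3}, \eqref{eq:Craig4} every occurrence of $C_j$ comes multiplied by $\gamma_j^{1/2}$ or $\gamma_j\lesssim \epsilon e^{-\kappa_0|m|/4}$, any subexponential bound on $C_j$ suffices, exactly as in your last step. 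Likewise, for the sum $\sum_{k\ne j}\gamma_k^{1/2}/\eta_{j,k}$ in \eqref{eq:Craig2} you should not aim for a uniform bound (the same counterexample applies) but for a polynomial one, which follows from \eqref{ineq:etadistance} by splitting into $|k|>|m|$ and $|k|<|m|$, as in the paper's Lemma~\ref{lem:craigforapp}; the factor $\gamma_j^{1/2}$ pulled out of the sum then absorbs it. With these two substitutions your argument becomes the paper's proof; as written, the key intermediate claim is false.
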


  \begin{ack}
  We would like to thank Benjamin Eichinger and Jo Nelson for clarifying conversations.
  \end{ack}

\section{Time evolution of the Weyl solutions and the Weyl $M$-matrix}

In this section, we will study the time dependence of Weyl solutions and the Weyl $M$-matrix for the family of Schr\"odinger operators associated to a fixed classical solution $q$ of \eqref{eq:kdvn}, \eqref{eq:initial} which obeys the boundedness condition \eqref{21jul3}.

Historically, the discovery of the Lax pair representation was preceded by a description in \cite{GGKM} of the time-evolution of formal eigensolutions for the Schr\"odinger operator. The main analytical result of this section is that under suitable conditions, not only are eigensolutions preserved in this way, but so are Weyl solutions, as defined below. After describing the time evolution of Weyl solutions, we will be able to compute the time evolution of Weyl $m$-functions, the Weyl $M$-matrix, and reflection coefficients.

If $W:\bbR \to \bbR$ is bounded, the Schr\"odinger operator $H_W$ is in the ``limit point" case at $\pm \infty$, i.e., for any $z\in \bbC \setminus \sigma(H_W)$ and each halfline $[0,\pm\infty)$, there is a one-dimensional subspace of solutions of
 \begin{equation}\label{jul16}
 -\psi'' + W \psi = z \psi
 \end{equation}
 which are square-integrable on that half-line. Any nontrivial eigensolution which is square-integrable on the half-line $[0,\pm\infty)$ is called a Weyl solution at $\pm\infty$ and denoted by $\psi_\pm(x;W)$; Weyl solutions are defined up to normalization. It is a common convention to set $\psi_\pm(0;W) = 1$, but this convention isn't natural in time-dependent considerations or when changing reference points.
Instead, we will assume that a Weyl solution has been chosen corresponding to $q(\cdot,0) = V$ and will consider a time evolution for the eigensolution, in which both $x$-dependence and $t$-dependence are written as a first-order (system of) ODEs.

We define, as in \cite{GH03},
  \begin{align*}
    Q(x,t)&=\begin{pmatrix}0&1\\q(x,t)-z&0\end{pmatrix},\\
        P(x,t)&=\begin{pmatrix} - \frac 12 \partial_x \hat F_n(z) & \hat F_n(z) \\ (q-z)\hat{F}_n(z)-\frac12 \partial_x^2 \hat{F}_n(z) & \frac 12 \partial_x \hat F_n(z)  \end{pmatrix},
  \end{align*}
  where
  \begin{align*}
    \hat{F}_n(z)=\sum_{\ell =0}^n\hat{f}_{n-\ell}z^{\ell}.
  \end{align*}
We consider, for any $z\in \bbC \setminus \sigma(H_V)$, the system of PDEs
  \begin{align}
    \label{eq:Weylpde}
    \begin{cases}
    \partial_t
    \nu_\pm(x,t)
    =P(x,t)\nu_\pm(x,t) \\
    \partial_x\nu_\pm(x,t)
    =Q(x,t)\nu_\pm(x,t)
    \\
    \nu_\pm(0,0)=\begin{pmatrix}\psi_\pm(0;V)\\\psi_\pm'(0;V) \end{pmatrix}
  \end{cases}
  \end{align}
  $Q(x,t)$ is the standard matrix for converting the second order eigenvalue equation,
  $H_{q(\cdot,t)}\psi=z\psi$,
  to a first order system, and $P(x,t)$ will determine the
  time evolution for the Weyl
  solutions of $H_V$.
 The matrix functions $P,Q:\R^2\to M_2(\C)$ satisfy the zero curvature condition if
  \begin{align}
    \label{eq:zerocurve}
   \partial_t Q -\partial_x P +[P,Q]=0
  \end{align}
  where $[P,Q]$ is the commutator.
 As an equality of differential expressions, the zero curvature condition \eqref{eq:zerocurve} is equivalent to \eqref{eq:kdvn} \cite{GH03}, and \eqref{eq:zerocurve} is sometimes viewed as a different way to introduce the KdV hierarchy.
  \begin{rem}
 Existence of a joint solution of the system  \eqref{eq:Weylpde} depends on the zero curvature condition  \eqref{eq:zerocurve} by standard arguments from differential geometry. While these arguments are often presented under $C^\infty$ assumptions, they hold under our smoothness conditions.
 Namely, the zero curvature condition \eqref{eq:zerocurve}
    involves  $\partial_x P$, a differential polynomial with at most $2n+1$ spatial derivatives of $q$ in each entry, and $\partial_t Q$, which includes only one time derivative of $q$ in the bottom left coordinate.
    For a classical solution $q$ of KdV-$n$, these partial derivatives exist and are jointly continuous in $(x,t)$. The system \eqref{eq:Weylpde} can be written as an autonomous system on $(x,t,\nu) \in \bbR^2 \times \bbC^2$ corresponding to vector fields $\hat{Q},\hat{P}$ written in the language of differential geometry as
    \begin{align*}
      &\hat{Q}:=\partial_x+P^{ij}(x,t)v_{j}\partial \nu_i\\
      &\hat{P}:=\partial_t+Q^{ij}(x,t)v_{j}\partial \nu_i.
    \end{align*}
These vector fields are $C^1$, since, as in the above, $P$ involves a differential polynomial with only $2n$ spatial derivatives on $q$,
while $Q$ only has $(x,t)$ dependence through $q$.  Then \eqref{eq:zerocurve} is equivalent to the vanishing of the Lie bracket $L_{\hat{P}}\hat{Q}=[\hat{P},\hat{Q}]=0$, so the flows of $\hat{P}$ and $\hat{Q}$ commute by differential geometry arguments in \cite[Lemma 5.13]{S05}. This shows existence of a solution of \eqref{eq:Weylpde}. Uniqueness follows from the fact that $\hat{Q},\hat{P}\in C^1$.
  \end{rem}

We will now prove that the first component of the solution $\nu_{\pm}(x,t)$ of \eqref{eq:Weylpde} is the Weyl solution for $H_{q(\cdot,t)}$ for each $t\in [0,T]$.

 \begin{prop}
Let $q$ be a classical solution of the Cauchy problem \eqref{eq:kdvn}, \eqref{eq:initial} obeying the boundedness condition \eqref{21jul3}. Fix $z \in \bbC \setminus \sigma(H_V)$ and let
\[
\nu_\pm(x,t) = \begin{pmatrix} \alpha_\pm(x,t) \\ \beta_\pm(x,t) \end{pmatrix}
\]
be the solution to \eqref{eq:Weylpde}. Then, for every $t\in [0,T]$, $\alpha_\pm(\cdot,t)$ is a Weyl solution at $\pm \infty$ for the potential $q(\cdot,t)$.
 \end{prop}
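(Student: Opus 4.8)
The plan is to reduce the proposition to a single nontrivial point — preservation in $t$ of square-integrability at $\pm\infty$ — and then to propagate that property by a Wronskian comparison with the genuine Weyl solution of $H_{q(\cdot,t)}$.

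First I would extract the structural facts from the $x$-equation $\partial_x\nu_\pm=Q\nu_\pm$: it gives $\beta_\pm=\partial_x\alpha_\pm$ and $\partial_x\beta_\pm=(q-z)\alpha_\pm$, so for every fixed $t$ the first component solves $-\partial_x^2\alpha_\pm+q(\cdot,t)\alpha_\pm=z\alpha_\pm$. At $t=0$ the initial condition $\nu_\pm(0,0)$ is the Weyl data of $V$, so solving the $x$-ODE recovers $\alpha_\pm(\cdot,0)=\psi_\pm(\cdot;V)$; in particular it is square-integrable near $\pm\infty$. Thus the entire content of the proposition is that this half-line integrability persists for $t\in(0,T]$. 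I would treat $z\in\bbC_+$ first (where the limit point property makes the Weyl solution unique up to a scalar), carry out the $+\infty$ case, obtain $-\infty$ by the mirror argument, and recover real $z\in\bbC\setminus\sigma(H_V)$ at the end by analyticity in $z$.

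Writing $\mathcal L:=\hat F_n\partial_x-\tfrac12(\partial_x\hat F_n)$ for the first-row part of $P$, the $t$-equation reads $\partial_t\alpha_+=\mathcal L\alpha_+$. Let $\phi_+(\cdot,t)$ denote the genuine Weyl solution of $H_{q(\cdot,t)}$ at $+\infty$, chosen to be $C^1$ in $t$, and form the Wronskian $\mathscr W(t):=\alpha_+\partial_x\phi_+-(\partial_x\alpha_+)\phi_+$, which is $x$-independent because both factors solve the same eigenvalue equation. Then $\alpha_+(\cdot,t)$ is a Weyl solution iff $\mathscr W(t)=0$, and $\mathscr W(0)=0$. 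A direct computation using $\partial_x^2u=(q-z)u$ shows the algebraic cancellation $\mathscr W(\mathcal Lu,v)+\mathscr W(u,\mathcal Lv)=0$ for any two solutions $u,v$. Differentiating $\mathscr W$ and inserting $\partial_t\alpha_+=\mathcal L\alpha_+$ together with $\partial_t\phi_+=\mathcal L\phi_++\rho$, where $\rho:=\partial_t\phi_+-\mathcal L\phi_+$, the cancellation collapses everything to $\dot{\mathscr W}(t)=\mathscr W(\alpha_+,\rho)$. It then remains only to identify $\rho$. Differentiating $(H_{q(\cdot,t)}-z)\phi_+=0$ in $t$ gives $(H-z)\partial_t\phi_+=-(\partial_tq)\phi_+$; the same relation holds with $\mathcal L\phi_+$ in place of $\partial_t\phi_+$, since this is the Lax/zero-curvature compatibility already encoded in \eqref{eq:zerocurve} (and checkable by differentiating $(H-z)\alpha_+=0$ and using that the resulting relation is an operator identity on the two-dimensional solution space). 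Subtracting, $(H-z)\rho=0$, so $\rho(\cdot,t)$ is again a solution. Moreover $\mathcal L\phi_+$ is square-integrable near $+\infty$: the boundedness hypothesis \eqref{21jul3}, together with Landau–Kolmogorov interpolation forcing $\partial_x^kq\in L^\infty$ for $0\le k\le 2n$, makes $\hat F_n$ and $\partial_x\hat F_n$ bounded, while $\phi_+,\partial_x\phi_+$ decay. Granting the analogous decay of $\partial_t\phi_+$, we get $\rho\in L^2([0,\infty))$, so by limit point uniqueness $\rho=c(t)\phi_+$ for a scalar $c(t)$. Then $\dot{\mathscr W}=c(t)\mathscr W(\alpha_+,\phi_+)=c(t)\mathscr W$, a homogeneous linear ODE with $\mathscr W(0)=0$; hence $\mathscr W\equiv0$ and $\alpha_+(\cdot,t)$ is the Weyl solution for every $t\in[0,T]$.

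The \emph{main obstacle} is precisely the input flagged above: that the genuine Weyl solution $\phi_+(\cdot,t)$ may be chosen $C^1$ in $t$ with $\partial_t\phi_+$ square-integrable at $+\infty$ — equivalently, that the flow operator $\mathcal L$ maps the one-dimensional Weyl subspace at $+\infty$ into itself. Establishing this needs the joint regularity in $t$ of the half-line resolvent and Weyl $m$-function arising from $q\in C^1$ in $t$, and it is here that \eqref{21jul3} does the essential work of controlling $\hat F_n$ and its derivatives so that decay at infinity is not destroyed. The subtlety is that $\partial_tq$ involves $\partial_x^{2n+1}q$, which is not directly controlled by \eqref{21jul3}, so the tail estimate must be run through the equation itself rather than by brute force. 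The remaining pieces — the eigenvalue equation at each fixed $t$, the algebraic Wronskian cancellation, the ODE uniqueness, the mirror argument at $-\infty$, and the analytic continuation in $z$ from $\bbC_+$ — are routine.
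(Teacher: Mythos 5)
Your Wronskian scheme is algebraically sound---the cancellation $W(\mathcal{L}u,v)+W(u,\mathcal{L}v)=0$ for two solutions $u,v$ does hold, and granted your inputs it yields $\dot{\mathscr{W}}=c(t)\mathscr{W}$---but the input you yourself flag as the ``main obstacle'' and then defer is not a technicality: it is the crux, and deferring it leaves the proof with a genuine gap. The existence of a family $\phi_+(\cdot,t)$ of genuine Weyl solutions of $H_{q(\cdot,t)}$, chosen $C^1$ in $t$ with $\partial_t\phi_+$ square-integrable near $+\infty$ (equivalently, that $\mathcal{L}$ preserves the Weyl subspace), is essentially \emph{equivalent} to the proposition being proved: once the proposition is known, $\phi_\pm:=\alpha_\pm$ is exactly such a family (since $\partial_t\alpha_\pm=\mathcal{L}\alpha_\pm=\hat F_n\,\partial_x\alpha_\pm-\tfrac12(\partial_x\hat F_n)\alpha_\pm\in L^2$ near $\pm\infty$), while conversely your argument derives the proposition from it. Nothing available at this stage gives such $t$-regularity of the Weyl subspace: in the paper, the time evolution of $m$-functions, of the $M$-matrix, and even the $t$-independence of $\sigma(H_{q(\cdot,t)})$ are all established \emph{after}, and from, this proposition. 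So the proposed reduction is circular in structure, and the repair you sketch (parameter-regularity theory for half-line resolvents and $m$-functions, with tail estimates ``run through the equation'') would be at least as hard as the statement itself. Two smaller gaps: passing from $\mathscr{W}(t)=0$ to ``$\alpha_+(\cdot,t)$ is a Weyl solution'' requires nontriviality of $\alpha_+(\cdot,t)$ (this does follow from uniqueness for the linear ODE $\partial_t\nu_+(x,\cdot)=P(x,\cdot)\nu_+(x,\cdot)$ at fixed $x$, but it must be said); and the final continuation from $z\in\bbC_+$ to real $z\notin\sigma(H_V)$ is not automatic, since pointwise analyticity of $\alpha_+(x,t;\cdot)$ in $z$ does not by itself transfer half-line square-integrability to boundary points, and for $t>0$ you do not yet know that real $z$ in a gap of $\sigma(H_V)$ lies off $\sigma(H_{q(\cdot,t)})$.

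The paper's proof avoids all of this by never invoking the genuine Weyl solutions at positive times. It sets $g(x,t)=\norm{\nu_\pm(x,t)}_{\C^2}^2$ and observes $\lvert\partial_t g\rvert\le 2\norm{P(x,t)}\,g$, where $C=\sup_{x,t}\norm{P(x,t)}<\infty$ by \eqref{21jul3} and interpolation (the same Landau--Kolmogorov point you make). Gronwall gives the two-sided pointwise bound $e^{-2Ct}g(x,0)\le g(x,t)\le e^{2Ct}g(x,0)$, and integrating in $x$ over the half-line propagates \emph{both} square-integrability (upper bound) and nontriviality (lower bound) at once, using that at $t=0$ the Weyl solution \emph{and its derivative} lie in $L^2$ near $\pm\infty$ for bounded $V$ (the cited Stolz/Simon/Lukic results). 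This works directly for every $z\in\bbC\setminus\sigma(H_V)$, with no limit-point uniqueness, no distinguished family $\phi_\pm$, and no continuation in $z$. If you want to salvage your approach, the honest route is to prove this Gronwall estimate first---at which point the Wronskian argument becomes unnecessary.
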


    \begin{proof}
    It follows from the form of $Q$ that $\partial_x\alpha_\pm(x,t) = \beta_\pm(x,t)$ and
    \[
    \partial_x^2 \alpha_\pm(x,t) = \partial_x \beta_\pm(x,t) = (q(x,t) -z)\alpha_\pm(x,t)
    \]
    so for each $t$, $\alpha_\pm(x,t)$ as a function of $x$ solves the eigensolution equation for potential $q(\cdot, t)$. It remains to prove that it is nontrivial for each $t$ and square-integrable on the corresponding half-line.

Let us denote
      \[
      g(x,t):=|\alpha_\pm(x,t)|^2+|\beta_\pm(x,t)|^2 =\norm{ {\nu}_{\pm}(x,t)}_{\C^2}^2.
      \]
      Then
      \[
      \partial_t g(x,t) =2 \Re \left\langle \partial_t {\nu}_{\pm}(x,t),
          {\nu}_{\pm}(x,t)  \right\rangle_{\C^2}  =2 \Re \left\langle P(x,t) {\nu}_{\pm}(x,t),
          {\nu}_{\pm}(x,t)  \right\rangle_{\C^2}
          \]
         so by the Cauchy-Schwarz inequality and using the operator norm of $P(x,t)$,
\[
\lvert \partial_t g(x,t) \rvert \le 2 \lVert P(x,t) \rVert g(x,t).
\]
      Since entries of $P(x,t)$ are polynomial expressions in $q, \partial_x q, \dots, \partial_x^{2n} q$, it follows from Sobolev inequalities and the boundedness assumption $q,\partial_x^{2n}q\in L^\infty(\R\times [0,T])$ that
      \[
      C = \sup_{x\in \bbR} \sup_{t\in [0,T]} \lVert P(x,t) \rVert < \infty.
      \]
Thus, it follows that
      \[
g(x,0) e^{-2C t} \le  g(x,t)\leq g(x,0)e^{2C t}
      \]
      for $t\in [0,T]$ and therefore
      \begin{equation}\label{21jul1}
e^{-2C t} \int_0^{\pm \infty} \lVert \nu_\pm(x,0) \rVert^2 \,dx \le  \int_0^{\pm \infty} \lVert \nu_\pm(x,t) \rVert^2 \,dx \le e^{2C t} \int_0^{\pm \infty} \lVert \nu_\pm(x,0) \rVert^2 \,dx.
      \end{equation}
Since $\alpha_\pm(\cdot,0)$ is a Weyl solution corresponding to the bounded potential $V$, it is by definition nontrivial. Moreover, by boundedness of $V$, the derivative of the Weyl solution is also square-integrable on the corresponding half-line \cite{Stolz92,Stolz95,Simon,LukicMMNP}, so
\[
0 < \int_0^{\pm \infty} \lVert \nu_\pm(x,0) \rVert^2 < \infty.
\]
It follows from \eqref{21jul1} that for all $t\in [0,T]$,
\[
0 < \int_0^{\pm \infty} \lVert \nu_\pm(x,t) \rVert^2 \,dx < \infty.
\]
Strict positivity implies that $\alpha_\pm(\cdot,t)$ is nontrivial and finiteness that $\alpha_\pm(\cdot,t)$ is the Weyl solution.
    \end{proof}

By the above lemma, we consider $\psi_\pm(x,t;z) = \alpha_\pm(x,t;z)$ a Weyl solution of $H(q(\cdot,t))$ at energy $z$ and we know that its time evolution is governed by the $P$-matrix,
\begin{equation}\label{21jul2}
   \partial_t
    \begin{pmatrix}
    \psi_\pm(x,t;z) \\
    \partial_x \psi_\pm(x,t;z)
    \end{pmatrix}
    =P(x,t;z)
        \begin{pmatrix}
    \psi_\pm(x,t;z) \\
    \partial_x \psi_\pm(x,t;z)
    \end{pmatrix}.
\end{equation}
Note that we are now making the $z$ dependence explicit in our notation.

  In particular
  the Weyl $m$ functions
  are now defined uniquely (regardless of the choice of normalization of $\psi_\pm(x,0,z)$) as the logarithmic derivatives of the Weyl solutions,
  \[
  m_{\pm}(x,t;z)=\pm\frac{\partial_x\psi_\pm(x,t;z)}{\psi_\pm(x,t;z)}
  \]
  and \eqref{21jul2} allows us to determine their time evolution, as well as that of the Weyl $M$-matrix,
  \[
  M=\begin{pmatrix}
     \frac{-1}{m_{-}+m_{+}}& \frac12 \frac{m_{-}-m_{+}}{m_{-}+m_{+}} \\
    \frac12 \frac{m_{-}-m_{+}}{m_{-}+m_{+}} & \frac{m_{-}m_+}{m_-+m_{+}}
  \end{pmatrix}
  \]

The following lemma follows an argument described in \cite{R08} for the KdV equation; we repeat the argument for
the rest of the hierarchy for the sake of completeness.

  \begin{lem}
    \label{Mmatrix}
Let $q$ be a classical solution of the Cauchy problem \eqref{eq:kdvn}, \eqref{eq:initial} obeying the boundedness condition \eqref{21jul3}.  Then,
  for each $x\in \R$ and $z\in \C\setminus \bbR$, the Weyl $M$-matrix is differentiable in $t \in [0,T]$ and obeys
  \begin{equation}\label{22jul1}
\partial_tM=PM+MP^\top.
  \end{equation}
  \end{lem}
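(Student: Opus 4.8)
The plan is to represent the Weyl $M$-matrix as a symmetric outer product of the Weyl solution vectors, so that the desired evolution \eqref{22jul1} becomes a one-line consequence of the time evolution \eqref{21jul2} of those vectors. Set $u_\pm(x,t)=\begin{pmatrix}\psi_\pm\\ \partial_x\psi_\pm\end{pmatrix}$, so that \eqref{21jul2} reads $\partial_t u_\pm = P u_\pm$. From $m_\pm=\pm\partial_x\psi_\pm/\psi_\pm$ we have $\partial_x\psi_+=m_+\psi_+$ and $\partial_x\psi_-=-m_-\psi_-$, hence $u_+=\psi_+\begin{pmatrix}1\\ m_+\end{pmatrix}$ and $u_-=\psi_-\begin{pmatrix}1\\ -m_-\end{pmatrix}$. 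Computing the symmetric outer product gives
\[
\tfrac12\left(u_+u_-^\top+u_-u_+^\top\right)=\psi_+\psi_-\begin{pmatrix}1 & \tfrac12(m_+-m_-)\\[2pt] \tfrac12(m_+-m_-) & -m_+m_-\end{pmatrix},
\]
and, introducing the Wronskian $W:=\psi_-\partial_x\psi_+-\psi_+\partial_x\psi_-=\psi_+\psi_-(m_++m_-)$, an entrywise comparison with the definition of $M$ shows
\[
M=-\frac{1}{2W}\left(u_+u_-^\top+u_-u_+^\top\right).
\]

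The second ingredient is that $W$ is independent of $t$. Writing $J=\begin{pmatrix}0&1\\-1&0\end{pmatrix}$, we have $W=u_-^\top J u_+$, so by $\partial_t u_\pm=Pu_\pm$,
\[
\partial_t W=u_-^\top\!\left(P^\top J+JP\right)u_+ .
\]
For any $2\times 2$ matrix one has the identity $P^\top J+JP=(\tr P)\,J$, and since $\tr P=-\tfrac12\partial_x\hat F_n+\tfrac12\partial_x\hat F_n=0$ we conclude $P^\top J+JP=0$ and therefore $\partial_t W=0$; equivalently, the $P$-flow is by $\mathrm{SL}(2)$-matrices and preserves the symplectic pairing. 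Moreover, for $z\in\bbC\setminus\bbR$ the solutions $\psi_+$ and $\psi_-$ are linearly independent, so $W\neq 0$, and the representation of $M$ makes sense with a nonvanishing denominator.

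With these two facts the conclusion is immediate. Because $W$ is a nonzero constant in $t$ and $u_\pm$ are differentiable in $t$ by the solvability of \eqref{eq:Weylpde}, the matrix $M$ is differentiable in $t$; and by the product rule together with $\partial_t u_\pm=Pu_\pm$,
\[
\partial_t\!\left(u_+u_-^\top\right)=(Pu_+)u_-^\top+u_+(Pu_-)^\top=P\,u_+u_-^\top+u_+u_-^\top\,P^\top,
\]
with the analogous identity for $u_-u_+^\top$. Summing these and dividing by $-2W$ yields $\partial_t M=PM+MP^\top$, which is \eqref{22jul1}.

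The computation is elementary once the representation $M=-\tfrac{1}{2W}(u_+u_-^\top+u_-u_+^\top)$ is in hand, so the one genuinely substantive step is recognizing this outer-product form of $M$ together with the symplectic invariance of $W$ coming from $\tr P=0$. I expect no analytic obstacle: the required regularity in $t$ is already supplied by the existence of the joint solution of \eqref{eq:Weylpde}, and the only bookkeeping is to confirm that the denominators $W$ and $\psi_\pm$ do not vanish for $z\in\bbC\setminus\bbR$. (One could instead differentiate the entries of $M$ directly via the Riccati evolution of $m_\pm$, but that route is computationally heavier and obscures the structural reason for the clean form of \eqref{22jul1}.)
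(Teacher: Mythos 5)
Your proof is correct, but it takes a genuinely different route from the paper's. The paper (following Rybkin's argument for KdV) works entrywise: from \eqref{21jul2} it derives the Riccati-type evolution $\partial_t m_\pm = \pm P_{21} + (P_{22}-P_{11})m_\pm \mp P_{12}m_\pm^2$, then differentiates the three entries $\tfrac{-1}{m_-+m_+}$, $\tfrac{m_-m_+}{m_-+m_+}$, $\tfrac12\tfrac{m_--m_+}{m_-+m_+}$ one by one and checks by direct computation that the resulting system is exactly \eqref{22jul1} --- precisely the ``computationally heavier'' route you mention in your closing parenthesis. You instead exhibit the outer-product identity $M = -\tfrac{1}{2W}\left(u_+u_-^\top + u_-u_+^\top\right)$ and reduce \eqref{22jul1} to the product rule plus constancy of the Wronskian. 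Both arguments ultimately rest on tracelessness of $P$: in the paper it enters silently through $P_{11}=-P_{22}=-\tfrac12\partial_x\hat F_n$ when the entrywise derivatives are matched against $PM+MP^\top$, whereas you isolate it as the exact conservation mechanism and make visible the extra term $-(\tr P)M$ that a general $P$ would produce. What your approach buys is the structural explanation and an essentially computation-free proof; what the paper's buys is that it stays entirely at the level of the $m$-functions. The two facts you use implicitly --- that the first components of the solutions of \eqref{eq:Weylpde} remain Weyl solutions of $H_{q(\cdot,t)}$ (so your $u_\pm$ are the right vectors from which to build $M(x,t;z)$), and that self-adjointness of $H_{q(\cdot,t)}$ gives the linear independence behind $W\neq 0$ for $z\in\bbC\setminus\bbR$ --- are both supplied by the proposition proved immediately before the lemma, so there is no gap.
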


  \begin{proof}
    For fixed $z\in \C_+$, since the time evolution of $\psi_\pm$ and $\partial_x \psi_\pm$ is given by \eqref{21jul2}, the time evolution of $m_\pm$ is computed as
    \begin{align*}
    \partial_t m_\pm
    =\pm\frac{\partial_t \partial_x \psi_\pm}{\psi_\pm}-m_\pm\frac{\partial_t\psi_\pm}{\psi_\pm}
    =\pm P_{21}+(P_{22}-P_{11})m_\pm\mp P_{12}m_{\pm}^{2}.
    \end{align*}
    Then, the time derivatives of
    $m_1=\frac{-1}{m_{+}+m_{-}}$, $m_2=\frac{m_{-}m_+}{m_++m_{-}}$,
  $m_3=\frac12 \frac{m_{-}-m_{+}}{m_{+}+m_{-}}$ follow by mere calculations,
    \begin{align*}
      \partial_tm_{1} & =2P_{11}m_1+2P_{12}m_{3},\\
      \partial_tm_{2} & =-2P_{11}m_{2}+2P_{21}m_3,\\
      \partial_tm_{3} & =P_{21}m_1+P_{12}m_2.
    \end{align*}
    So by direct computation, we have $\partial_t M=PM+MP^\top$.
\end{proof}

\begin{cor}
\label{cor:timeinvariance}
Let $q$ be a classical solution of the Cauchy problem \eqref{eq:kdvn}, \eqref{eq:initial}
obeying the boundedness condition \eqref{21jul3}.
Then the spectrum $S = \sigma(H_{q(\cdot,t)})$
is independent of $t\in [0,T]$ and \eqref{22jul1} holds for all $z\in \bbC \setminus S$.
\end{cor}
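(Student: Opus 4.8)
The plan is to exploit the linear structure of the evolution equation \eqref{22jul1} together with the crucial fact that $P(x,t;z)$ is a \emph{polynomial} in $z$ (its entries involve $\hat F_n(z)=\sum_\ell \hat f_{n-\ell}z^\ell$ and its $x$-derivatives), hence entire in $z$ with coefficients that, by \eqref{21jul3} and Sobolev bounds, are bounded and jointly continuous in $(x,t)$. Fix $x\in\bbR$ and let $U(x,t;z)$ solve the linear system $\partial_t U = P(x,t;z)U$ with $U(x,0;z)=I$. Since $P$ depends polynomially on $z$ and continuously on $t$, standard results on analytic dependence of solutions of linear ODEs on a holomorphic parameter show that $U(x,t;\cdot)$ is entire for each fixed $(x,t)$. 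Moreover $\tr P(x,t;z)=-\tfrac12\partial_x\hat F_n(z)+\tfrac12\partial_x\hat F_n(z)=0$, so $\partial_t\det U=(\tr P)\det U=0$ and $\det U(x,t;z)\equiv 1$; in particular $U(x,t;\cdot)^{-1}$ is entire as well.

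First I would establish the conjugation identity
\begin{equation*}
M(x,t;z)=U(x,t;z)\,M(x,0;z)\,U(x,t;z)^\top .
\end{equation*}
For $z\in\bbC\setminus\bbR$, Lemma~\ref{Mmatrix} says $M(x,\cdot;z)$ solves $\partial_t M=PM+MP^\top$; a direct computation using $\partial_t U=PU$ shows the right-hand side solves the same linear ODE with the same value at $t=0$, so the two coincide by uniqueness for linear ODEs. This proves the identity on $\bbC\setminus\bbR$.

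Next I would deduce $t$-independence of the spectrum. The essential input from spectral theory is that the Weyl $M$-matrix is a matrix Herglotz function whose representing measure is the spectral measure of $H_{q(\cdot,t)}$; consequently $M(x,t;\cdot)$ extends analytically to a neighborhood of a point $z_0\in\bbR$ if and only if $z_0\notin\sigma(H_{q(\cdot,t)})$. Now if $z_0\notin\sigma(H_V)=\sigma(H_{q(\cdot,0)})$, then $M(x,0;\cdot)$ is analytic near $z_0$, so the right-hand side of the conjugation identity---a product of the entire matrices $U,U^\top$ with $M(x,0;\cdot)$---is analytic in a full complex neighborhood of $z_0$ and agrees with $M(x,t;\cdot)$ off the real axis; this furnishes an analytic continuation of $M(x,t;\cdot)$ across $z_0$, whence $z_0\notin\sigma(H_{q(\cdot,t)})$. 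The reverse inclusion follows identically from the inverted identity $M(x,0;z)=U^{-1}M(x,t;z)(U^\top)^{-1}$, using that $U^{-1}$ is entire. Since resolvent sets always contain $\bbC\setminus\bbR$, this yields $\sigma(H_{q(\cdot,t)})=\sigma(H_{q(\cdot,0)})=S$ for all $t\in[0,T]$.

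Finally, once $\sigma(H_{q(\cdot,t)})=S$ for every $t$, both sides of the conjugation identity are analytic on the connected open set $\bbC\setminus S$ and agree on $\bbC\setminus\bbR$, so the identity holds throughout $\bbC\setminus S$ by the identity theorem. Differentiating it in $t$ and invoking $\partial_t U=PU$ then recovers $\partial_t M=PM+MP^\top$ for every $z\in\bbC\setminus S$, and at the same time shows $M$ is differentiable in $t$ there, completing both claims. The main obstacle---indeed the only nonmechanical step---is the spectral-theoretic characterization of $\sigma(H_{q(\cdot,t)})$ as exactly the set of non-analyticity of the $M$-matrix (equivalently, the support of its Herglotz measure); once that is in place, the entireness and unit determinant of $U$ make the transfer of analyticity across the real axis automatic. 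One should also check this characterization is independent of the reference point $x$, which is standard.
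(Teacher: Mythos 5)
Your proposal is correct and follows essentially the same route as the paper: both rest on Lemma~\ref{Mmatrix}, the analyticity of $P$ in $z$, and the characterization of $\sigma(H_{q(\cdot,t)})$ as the complement of the set where the $M$-matrix admits an analytic extension, transferring that extension from $t=0$ to time $t$ by solving the linear evolution \eqref{22jul1} (your propagator $U$ with $\det U\equiv 1$ is an explicit packaging of this step, with the entire matrix $U^{-1}$ playing the role of the paper's backward-in-time solving for the reverse inclusion). The only real difference is presentational: the paper states the spectral characterization with the explicit reflection symmetry \eqref{22jul2} and verifies that the flow preserves it, whereas you fold that symmetry into the requirement that the continuation extend $M$ from \emph{both} half-planes simultaneously --- equivalent formulations of the same spectral-theoretic fact.
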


\begin{proof}
We use a characterization of the spectrum of a Schr\"odinger operator
in terms of its Weyl $M$-matrix:
the spectrum of $H_{q(\cdot,t)}$ is the complement of the maximal set $\Omega \subset \C$ such that $M(0,t;z)$ has an analytic extension as a matrix-valued function on
$\Omega$ with the ``symmetry"
\begin{equation}\label{22jul2}
M(0,t;z) = M(0,t;\bar z)^*.
\end{equation}
We will show that the existence of such an analytic extension on some set is an invariant for the time evolution \eqref{22jul1}.

   Since $M(0,0;z)$ has an analytic extension with the property \eqref{22jul2} on $\bbC \setminus \sigma(H_{V})$, and the matrix $P$ is bounded for $t\in [0,T]$ and analytic in
    $z\in \C$, solving \eqref{22jul1} as an initial value problem starting from $t=0$ shows that $M(0,t;z)$ is also analytic on $\bbC \setminus \sigma(H_V)$. Moreover, since $P(\bar z) = \overline{P(z)}$, \eqref{22jul1} implies
    \[
    \partial_t M(0,t;\bar z)^* = P(0,t;z) M(0,t;\bar z)^* + M(0,t;\bar z)^* P(0,t;z)^\top.
    \]
In words, $M(0,t;\bar z)^*$ obeys the same time evolution as $M(0,t;z)$. Since $M(0,0;\bar z)^* = M(0,0;z)$, it follows that $M(0,t;\bar z)^* = M(0,t;z)$ for all $t$.

Since we have proved that $M(0,t;z)$ has an analytic extension with the property \eqref{22jul2} on $\bbC \setminus \sigma(H_{V})$, it follows that $\sigma(H_{q(\cdot,t)}) \subset \sigma(H_V)$ for all $t\in [0,T]$. Analogously, solving \eqref{22jul1} backward in time shows that $\sigma(H_V) \subset \sigma(H_{q(\cdot,t)})$ and completes the proof.
  \end{proof}

As Herglotz functions, $m_\pm$ have nontangential boundary values $m_\pm(\lambda+i0)$
 for Lebesgue-a.e. $\lambda \in \bbR$.
 Moreover, for a nontrivial Herglotz function, the boundary values are nonzero almost everywhere. In particular, for Lebesgue a.e. $\lambda$, the boundary values $m_\pm(\lambda+i0)$ exist and $(m_- + m_+)(\lambda+i0) \neq 0$.

This justifies the definition, for a.e. $\lambda \in \sigma(H_W)$, of the left and right reflection coefficient for a Schr\"{o}dinger operator $H_W$
defined as in \cite{R08} by
  \begin{align}
    R_{\pm}(x;\lambda):=\left( -\frac{m_{\mp}+\overline{m_{\pm}}}{m_{-}+m_{+}}\right) (x;\lambda+i0).
  \end{align}
If this quantity vanishes for Lebesgue-a.e. $\lambda \in \sigma(H_W)$, then
  \eqref{eq:reflectionlessness} holds and $H_{W}$ is reflectionless.
  Invariance of the reflectionless property under the translation flow was already proven in
  \cite[Corollary 3]{R08}.
  We note the generalization of the argument of \cite[Theorem 2]{R08} to the KdV-$n$ flow.
  In light of the above corollary, we may define the reflection coefficient of $H_{q(\cdot,t)}$
  as
  \begin{align}
    R_{\pm}(x,t;\lambda):=\left( -\frac{m_{\mp}+\overline{m_{\pm}}}{m_{-}+m_{+}}\right) (x,t;\lambda+i0)
  \end{align}
  for almost very $\lambda\in \sigma(H_V)$. We use Lemma~\ref{Mmatrix} to derive an ODE for
  the reflection coefficient.

  \begin{prop}
    Suppose
     $q$ obeys \eqref{eq:kdvn}, \eqref{eq:initial}, \eqref{21jul3}
    for some $T>0$, then
    \begin{equation}\label{25jul0}
       R_{\pm}(x,t;\lambda) =R_\pm(x,0;\lambda)\exp\left\{ 2i\int_0^t\Im(m_\pm(x,s;\lambda+i0))P_{12}(x,s;\lambda)\dd s\right\}
    \end{equation}
    for all $0\leq t\leq T$ and almost every $\lambda\in \sigma(H_V)$.
  \end{prop}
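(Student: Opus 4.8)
The plan is to fix $x$ and a.e.\ $\lambda\in\sigma(H_V)$, derive a \emph{scalar linear} ODE in $t$ for the boundary value $R_\pm(x,t;\lambda)$, and integrate it. Writing $R_\pm=-(m_\mp+\overline{m_\pm})/(m_-+m_+)$ with all quantities evaluated at $(x,t;\lambda+i0)$, I would differentiate this quotient in $t$ and substitute the Riccati evolution
\[
\partial_t m_\pm=\pm P_{21}+(P_{22}-P_{11})m_\pm\mp P_{12}m_\pm^2
\]
obtained in the proof of Lemma~\ref{Mmatrix}. The aim is to show that, after cancellation, the logarithmic $t$-derivative of $R_\pm$ collapses to exactly the integrand of \eqref{25jul0}, whence $R_\pm(x,t;\lambda)=R_\pm(x,0;\lambda)\exp\{\int_0^t(\cdots)\dd s\}$.

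The first and main difficulty is that the Riccati identity above is derived for $z\in\C_+$, whereas $R_\pm$ lives on the spectrum, where the Weyl $M$-matrix need not extend analytically. To legitimize differentiating the boundary values in $t$, I would fix $x$ and introduce the fundamental solution $U(x,t;z)$ of the linear system $\partial_t U=P(x,t;z)U$, $U(x,0;z)=I$. Since each entry of $P$ is a polynomial in $z$ and, by \eqref{21jul3}, is bounded and continuous in $(x,t)$, the matrix $U(x,t;z)$ is entire in $z$ and $C^1$ in $t$ for each fixed $x$. Solving \eqref{21jul2} then exhibits $m_\pm(x,t;z)$ as a fractional linear transformation of $m_\pm(x,0;z)$ whose coefficients are entries of $U(x,t;z)$. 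Because $m_\pm(x,0;\lambda+i0)$ exists for a.e.\ $\lambda$ and these coefficients are continuous up to the real axis, the representation transfers to boundary values: for a.e.\ $\lambda$, $m_\pm(x,t;\lambda+i0)$ exists, is $C^1$ in $t$, and satisfies the Riccati identity at $z=\lambda+i0$. Moreover, since $q$ is real, $P(x,t;\lambda)$ and hence $U(x,t;\lambda)$ are real for real $\lambda$; conjugating the fractional linear relation shows $\overline{m_\pm(x,t;\lambda+i0)}$ is the same transformation applied to $\overline{m_\pm(x,0;\lambda+i0)}$, so $\overline{m_\pm}$ is $C^1$ in $t$ with $\partial_t\overline{m_\pm}=\overline{\partial_t m_\pm}$, i.e.\ it obeys the conjugate Riccati equation with the same (real) coefficients $P_{ij}$.

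With these four evolutions in hand, the remainder is the quotient-rule computation. The numerator $m_\mp+\overline{m_\pm}$ and the denominator $m_-+m_+$ each satisfy a linear $t$-equation whose coefficient has the form $(P_{22}-P_{11})+P_{12}(\cdots)$: the $\pm P_{21}$ terms cancel between the two Weyl solutions, and the common $(P_{22}-P_{11})$ factor cancels in the logarithmic derivative. The surviving $P_{12}$-quadratic contributions combine through $m^2-\tilde m^2=(m-\tilde m)(m+\tilde m)$, and the difference of the numerator and denominator coefficients reduces to the single term $2i\,\Im(m_\pm)P_{12}$ via $m_\pm-\overline{m_\pm}=2i\,\Im m_\pm$, which is precisely the integrand in \eqref{25jul0}. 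Thus $\partial_t R_\pm=2i\,\Im(m_\pm)P_{12}\,R_\pm$ for a.e.\ $\lambda$, and integrating this scalar linear ODE gives \eqref{25jul0}. The only points needing care beyond bookkeeping are that $(m_-+m_+)(\lambda+i0)\neq0$ for a.e.\ $\lambda$ (already noted, so the quotient and its logarithmic derivative are defined a.e.) and that the exceptional null set of $\lambda$ may be taken independent of $t$, which the fundamental-solution representation guarantees since it is inherited from the $t=0$ boundary values.
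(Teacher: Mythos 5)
Your proposal is correct and follows essentially the same route as the paper: extend the time evolution of the Weyl $m$-functions to their nontangential boundary values at a.e.\ $\lambda\in\sigma(H_V)$, compute the logarithmic $t$-derivative of $R_\pm$ from the Riccati equations $\partial_t m_\pm=\pm P_{21}+(P_{22}-P_{11})m_\pm\mp P_{12}m_\pm^2$ (using that $P$ is real on the real axis), and integrate the resulting scalar linear ODE. Your fundamental-solution/M\"obius argument for passing to boundary values is in fact a more careful justification of the step the paper dispatches in one sentence (``the evolution may be extended to these $\lambda$''), echoing the initial-value-problem argument in Corollary~\ref{cor:timeinvariance}.
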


  \begin{proof}
    In addition to $z\in \bbC \setminus \sigma(H_V)$,
    the time evolution \eqref{22jul1} holds wherever
    the  Weyl $m$-functions have nontangential imits. Again, by basic properties of
    Herglotz functions,
    the nontangential limit, denoted $m_{\pm}(x,0,\lambda+i0)$ exists for
    Lebesgue almost
    every $\lambda\in \R$. So,
    $m_{\pm}(x,0;\lambda+i0)$ exists for Lebesgue almost every $\lambda\in \sigma(H_V)$
    and the evolution in \eqref{Mmatrix} may be extended to these $\lambda$.

    Thus, using \ref{Mmatrix} and the symmetry $P(\overline{z})=\overline{P(z)}$,
    we compute for almost every $\lambda\in \sigma(H_V)$,
    \begin{align*}
      \partial_tR_{\pm }(x,t;\lambda)&=2iP_{12}(x,t;\lambda) R_\pm(x,t,\lambda) \Im m_\pm(x,t;\lambda+i0)
    \end{align*}
    which implies \eqref{25jul0}.
  \end{proof}

In particular, since the reflectionless property corresponds to the case $R_\pm= 0$,  the proof of Prop.~\ref{reflectionlessness} is immediate.

Using the results above, we will prove Cor.~\ref{periodicity}. Our proof relies
on the following facts established in \cite{MO75,SY2}. By \cite{MO75},
if $S$ is the spectrum of a periodic operator with period $\ell$, we may associate to
any point $\{ \mu_j,\sigma_j\}_{j\in J}\in \cD(S)$ a periodic operator $H_Q$ with the same
Dirichlet data and period $\ell$.
Since $H_Q$, being periodic, is reflectionless, the bijective correspondence
between $\cR(S)$ and $\cD(S)$ established in \cite{SY2} proves
that $H_Q$ is the only
reflectionless operator corresponding to $\{ \mu_j,\sigma_j\}_{j\in J}$.

\begin{proof}[Proof of Cor.~\ref{periodicity}]
Let $S=\sigma(V)$ and fix $t\in [0,T]$.
By \eqref{cor:timeinvariance}, $\sigma(q(\cdot, t))=S$.
Associate to $q(\cdot,t)$ a point
$\{\mu_j(\cdot,t),\sigma_j(\cdot,t) \}\in \cD(S)$.
By \cite{MO75}, there exists an operator
$H_{Q(\cdot,t)}$ with Dirichlet data $\{\mu_j(\cdot,t),\sigma_j(\cdot,t) \}\in \cD(S)$
and such that $Q(\cdot+\ell, t)=Q(\cdot, t)$.
By Prop.~\ref{reflectionlessness},
$q(\cdot,t)\in \cR(S)$. Furthermore, by the bijection established in
\cite{SY2}, and since $Q(\cdot,t)\in \cR(S)$, we have the equality
$q(\cdot, t)=Q(\cdot,t)$.
Thus, $q(\cdot+\ell,t)=q(\cdot,t)$ for all $t$, so $q$ is a spatially periodic solution of KdV-$n$, which implies $q(\cdot,t) = \cS_t(V)$.
\end{proof}

\section{Diagonal Green's function and trace formulas}

In order to turn our attention to Dirichlet eigenvalues, we have to review some facts about the diagonal Green's function. Consider the Schr\"odinger operator $H_{q(\cdot,t)}$; denote its Weyl solutions by $\psi_\pm(x,t;z)$ and $m$-functions by $m_\pm(x,t;z)$. Its diagonal Green's function can be expressed in terms of Weyl solutions as
\[
G(x,x,t;z) = - \frac 1{m_-(x,t;z) + m_+(x,t;z)}.
\]
It is well known (see \cite[Theorem 4.5]{S99} and \cite{GHSZ})
  that the Weyl $m$-functions for a potential $W$ which is  $2n+1$ times differentiable
  in $x$
  have an asymptotic expansion of the form
  \begin{align*}
    m_\pm(x,t;-k^2) =-k + \sum_{j=1}^{2n+1} (\pm 1)^j c_{j+1}(x,t) k^{-j}+o\left( k^{-2n-1} \right)
  \end{align*}
  as $k\to \infty$ nontangentially, $\epsilon<\arg(k)<\frac{\pi}{2}-\epsilon$ for some $\epsilon>0$.
  The coefficients $c_j$ can be computed using the Ricatti equation for $m_\pm$; likewise, the diagonal Green's function is found to have a similar expansion and its coefficients can be computed \cite[Equation D.22]{GH03} using the identity
  \begin{align*}
    -2G \partial_x^2 G +(\partial_x G)^2+4(q-z)G^2=1
  \end{align*}
which results in the asymptotic expansion
\begin{align}
\label{originalexpansion}
G(x,x,t;-k^2) = \frac 1{2} \sum_{\ell=0}^{n+1} \hat f_\ell k^{-2\ell-1} + o(k^{-2n-3})
\end{align}
valid as $k\to \infty$ nontangentially, $\epsilon<\arg(k)<\frac{\pi}{2}-\epsilon$ for some $\epsilon>0$. Here $\hat f_\ell$ are precisely the quantities defined recursively in the introduction.

We will  assume from now on that $S$ obeys the moment condition \eqref{eq:Craig5}. We consider the scalar fields $R_m$ on $\cD(S)$ for $m\le n$.
Our goal will be to prove that these are continuous scalar fields on $\cD(S)$ and that they correspond precisely to $\hat f_\ell$ for our solution. In other words, we will prove that our solution $q(x,t)$ obeys certain trace formulas.

  We will use the following estimate from \cite[Lemma 4.1]{BDGL18}: if $S$ obeys the condition \eqref{eq:Craig5},
  then $Q_k$ is a continuous scalar field on $\cD(S)$ for any $k \le n+1$. It is bounded by
  \begin{align}
    \label{ineq:BDGLestimate}
    Q_{k}\leq
      |\underline{E}|^k+3D_{k}\sum_{j\in J}(1+\eta_{j,0}^{k-1})\gamma_j
  \end{align}
where the constant $D_k$ depends only on the index $k$, $\underline{E}$ and
  $\sup_{j\in J}\gamma_j$.

It follows that $R_m$ defined by \eqref{eq:Rmdefn} is also a continuous scalar field on $\cD(S)$
for $1\leq m\leq n$.
This definition will be motivated
by the fact that for solutions $q(x,t)$
of the Cauchy problem, the trace
formulas will be of the form $\hat{f}_m= R_m$,
 in the sense given in Lemma~\ref{lem:compfn} below.  We begin with an elementary inequality (which will be repeatedly useful) and then proceed to derive some estimates on these scalar fields.

  \begin{lem}
      \label{lem:estimate1} \marginpar{}
      For $\sigma(H_V)$ obeying
      \eqref{eq:Craig5},
      there are constants $M_1$, $M_3$ such that for $1\leq m\leq n$,
      \[
      |R_{m }|\leq M_1
      \]
      and for any $k\in J$,
      \begin{equation}\label{7aug1}
      \left|\frac{\partial R_{m }}{\partial \varphi_k}\right|\leq
      M_3(1+(\eta_{k,0}+\gamma_k)^m)\gamma_k.
      \end{equation}
      \end{lem}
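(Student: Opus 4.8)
The plan is to estimate $R_m$ and its partial derivatives through the definition \eqref{eq:Rmdefn}, reducing everything to bounds on the scalar fields $Q_k$ and their derivatives. I would first record that $R_m$ is a finite sum of monomials $\prod_{k=1}^m Q_k^{\alpha_k}/(\alpha_k!(2k)^{\alpha_k})$ indexed by $\alpha\in\N_0^n$ with $\sum k\alpha_k=m$. Since each $Q_k$ with $1\le k\le n$ is bounded on $\cD(S)$ by \eqref{ineq:BDGLestimate} (the $Q_k$ are continuous scalar fields on the compact torus $\cD(S)$, hence bounded, and \eqref{eq:Craig5} makes the bounding series converge), the first estimate $|R_m|\le M_1$ is immediate: $R_m$ is a finite polynomial expression in uniformly bounded quantities, so it is itself uniformly bounded, with $M_1$ depending only on $n$, $\underline E$, and the relevant sums in \eqref{ineq:BDGLestimate}.

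For the derivative bound \eqref{7aug1}, I would differentiate term by term. By the product rule, $\partial R_m/\partial\varphi_k$ is a sum of terms each of which is a product of the same $Q$-monomial structure but with one factor $Q_\ell$ replaced by $\partial Q_\ell/\partial\varphi_k$. Since the undifferentiated $Q$-factors are uniformly bounded by the previous paragraph, the whole expression is controlled by a constant times $\max_{1\le \ell\le m}|\partial Q_\ell/\partial\varphi_k|$. Thus the key input is a pointwise bound on $\partial Q_\ell/\partial\varphi_k$. From the definition \eqref{eq:higherordertrace}, $Q_\ell$ depends on $\varphi_k$ only through the single Dirichlet datum $\mu_k$, via the term $-2\mu_k^\ell$, so
\[
\frac{\partial Q_\ell}{\partial\varphi_k} = -2\ell\,\mu_k^{\ell-1}\,\frac{\partial\mu_k}{\partial\varphi_k}.
\]
Now $\mu_k\in[E_k^-,E_k^+]$ forces $|\mu_k|\le |\underline E|+\eta_{k,0}+\gamma_k$, so $|\mu_k^{\ell-1}|\le C(1+(\eta_{k,0}+\gamma_k)^{\ell-1})$; and differentiating the coordinate relation \eqref{eq:coordinates1} gives $\partial\mu_k/\partial\varphi_k = -\tfrac12(E_k^+-E_k^-)\sin\varphi_k = O(\gamma_k)$. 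Combining these yields $|\partial Q_\ell/\partial\varphi_k|\le C(1+(\eta_{k,0}+\gamma_k)^{\ell-1})\gamma_k$, and since $\ell\le m$ this is dominated by $C(1+(\eta_{k,0}+\gamma_k)^m)\gamma_k$, which gives \eqref{7aug1} after absorbing all constants into $M_3$.

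The main obstacle I anticipate is keeping the bookkeeping clean rather than any genuine analytic difficulty: one must verify that all constants can be chosen uniformly in $k\in J$ and independent of the point on $\cD(S)$, and that the extra factor $\gamma_k$ (which is what makes \eqref{7aug1} summable in $k$ later) really survives the estimate. This factor comes entirely from $\partial\mu_k/\partial\varphi_k = O(\gamma_k)$, so I would be careful to extract it cleanly from \eqref{eq:coordinates1} and not lose it when bounding $\mu_k^{\ell-1}$. The only point requiring the hypothesis \eqref{eq:Craig5} directly is the uniform boundedness of the $Q_k$; everything else is elementary calculus on each gap.
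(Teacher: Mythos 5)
Your proposal is correct and follows essentially the same route as the paper: both expand $R_m$ as a polynomial in the $Q_k$, bound the $Q_k$ uniformly via \eqref{ineq:BDGLestimate} under \eqref{eq:Craig5}, and obtain \eqref{7aug1} from the chain/product rule together with the key observations that $\partial Q_\ell/\partial\varphi_k=-2\ell\,\mu_k^{\ell-1}\,\partial\mu_k/\partial\varphi_k$, that $|\partial\mu_k/\partial\varphi_k|\le\gamma_k/2$ from \eqref{eq:coordinates1}, and that $|\mu_k|\le|\underline{E}|+\eta_{k,0}+\gamma_k$, finishing with the power mean inequality \eqref{7aug2}. The only cosmetic difference is that the paper gets $|R_m|\le M_1$ from continuity of the $Q_k$ plus compactness of $\cD(S)$, whereas you invoke the explicit bound \eqref{ineq:BDGLestimate} directly; both rest on the same hypothesis \eqref{eq:Craig5}.
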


      \begin{proof}
      Note that $R_m$ is a polynomial in the $Q_k$.
      For the sake of clarity, we denote for $1\leq m\leq n$,
      \begin{align*}
        R_{m }=P_{m}(Q_1,\dots,Q_{m})
      \end{align*}
      In the above, the $P_{m}$ are polynomials of degree $m$ in the first $m$
      scalar fields defined
      in \eqref{eq:higherordertrace}. Due to the moment condition \eqref{eq:Craig5},
      the $Q_k$ for $1\leq k\leq n+1$ are continuous, and thus, so are the $R_m$.
      Also by condition \eqref{eq:Craig5}, $\mathcal{D}(S)$ with the metric
      \eqref{eq:metric} is compact. Thus, we have a uniform bound on each $R_m$ for
      $1\leq m\leq n$. Taking the maximum of these yields $M_1$.

      Noting that
      \begin{align*}
        n\geq \max_{1\leq k,j\leq n}\deg\left(\frac{\partial P_k}{\partial Q_j} \right),
      \end{align*}
      we write
      \begin{align*}
        \frac{\partial P_m}{\partial Q_j}=\sum_{|\alpha|\leq m}A_\alpha\prod_{k=1}^{m}Q_k^{\alpha_k}
      \end{align*}
      We then take $A$ to be the maximum of all the coefficients of the
      $\frac{\partial P_m}{\partial Q_j}$ for $1\leq m,j \leq n$ and
      using the notation in estimate \eqref{ineq:BDGLestimate},
      define $C=\max_{1\leq k\leq n}D_{k}$ for convenience.
      We also note the elementary fact
      $1+x^k\leq 2(1+x^{n})$ for $0\leq k\leq n$, and $x> 0$.
      Then for any $m$ and $j$ we have the estimate, uniform in $m$ and $j$,
      \begin{align*}
        \left| \frac{\partial P_m}{\partial Q_j} \right|&\leq
        A\sum_{|\alpha|\leq m}\prod_{k=1}^{m}(|\underline{E}|^k+3D_k\sum_{i\in J}(1+\eta_{i,0}^{k-1})\gamma_i)^{\alpha_k}\\
        &\leq A\sum_{|\alpha|\leq m}\prod_{k=1}^{m}3^{\alpha_k}(1+|\underline{E}|^n)^{\alpha_k}(1+D_k\sum_{i\in J}(1+\eta_{i,0}^{k-1})\gamma_i)^{\alpha_k}\\
        &\leq 3^n(1+|\underline{E}|^n)^{n}\sum_{|\alpha|\leq m}\prod_{k=1}^{m}(1+2C\sum_{i\in J}(1+\eta_{i,0}^{n-1})\gamma_i)^{\alpha_k}\\
        &\leq 3^n(1+|\underline{E}|^n)^{n}\left(1+2C\sum_{i\in J}(1+\eta_{i,0}^{n-1})\gamma_i\right)^{n}\left({2n\choose n}-1 \right)\\
        &=:M_2
      \end{align*}
      Thus, for $1\leq m \leq n$ we have
      \begin{align*}
        \left|\frac{\partial R_{m }}{\partial \varphi_k}\right|
        &=
        \left|
        \frac{\partial P_{m}}{\partial Q_1}\frac{\partial Q_1}{\partial \varphi_k}+\cdots+
        \frac{\partial P_{m}}{\partial Q_m}\frac{\partial Q_m}{\partial \varphi_k} \right|\\
        & \leq
        \left|\frac{\partial P_{m}}{\partial Q_1}\right|\left|\frac{\partial Q_1}{\partial \varphi_k}\right|+\cdots+
        \left|\frac{\partial P_{m}}{\partial Q_m}\right|\left|\frac{\partial Q_m}{\partial \varphi_k}\right|
        \\
        &\leq M_2\gamma_k\left( 1+2(|\underline{E}|+\eta_{k,0}+\gamma_{k})+\cdots+m(|\underline{E}|+\eta_{k,0}+\gamma_{k})^{m-1}\right)\\
        &\leq \frac{m(m+1)}{2}(1+(|\underline{E}|+\eta_{k,0}+\gamma_k)^m)M_2\gamma_k.
      \end{align*}
By using the power mean inequality
\begin{equation}\label{7aug2}
1+(|\underline{E}|+\eta_{k,0}+\gamma_k)^m \le 2^{m-1}(1+|\underline{E}|^m)(1+(\eta_{k,0}+\gamma_k)^m),
\end{equation}
this implies \eqref{7aug1} with $M_3:=\frac{n(n+1)}{2}2^{n}(1+|\underline{E}|^n)M_2$.
\end{proof}

For a reflectionless solution $q$, let us define for each $(x,t) \in \bbR \times [0,T]$  the set of Dirichlet data $\varphi(x,t) \in \cD(S)$. Begin by defining
  \begin{align}
    \label{eq:Dirichleteig2}
    \mu_j(x,t) =\begin{cases}
    z\in (E_j^-,E_j^+), & G(x,x,t;z)=0\\
    E_j^+, & G(x,x,t;z)<0\;\;\forall z\in (E_j^-,E_j^+)\\
    E_j^-, & G(x,x,t;z)>0\;\;\forall z\in (E_j^-,E_j^+)
    \end{cases}
  \end{align}
If $\mu_j(x,t) \in (E_j^-, E_j^+)$, we also define $\sigma_j = - \partial_x G(x,x,t;z)\vert_{z=\mu_j(x,t)} \in \{\pm 1\}$. Then $\varphi_j(x,t)$ are defined by \eqref{eq:coordinates1}, \eqref{eq:coordinates2}.

We will now prove that the values of differential polynomials $\hat f_\ell$ evaluated at some $(x,t)$ are precisely given by the scalar fields $R_\ell$ evaluated at the Dirichlet data $\varphi(x,t)$.

\begin{lem}
  \label{lem:compfn}
Let $q$ be a classical solution of the Cauchy problem \eqref{eq:kdvn}, \eqref{eq:initial}
obeying the boundedness condition
\eqref{21jul3}. Let $V \in \cR(S)$
where $S$ obeys the moment condition
\eqref{eq:Craig5}. For all $1 \le m \le n$ and all $(x,t) \in \bbR \times [0,T]$,
\[
\hat f_m (x,t) = R_m(\varphi(x,t)).
\]
\end{lem}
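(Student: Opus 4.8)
The plan is to identify two different asymptotic expansions of the same diagonal Green's function $G(x,x,t;-k^2)$ as $k\to\infty$: one coming from the general theory of differential-polynomial invariants, the other from the reflectionless structure of $H_{q(\cdot,t)}$. First I would note that, by Prop.~\ref{reflectionlessness}, the hypothesis $V\in\cR(S)$ propagates to $q(\cdot,t)\in\cR(S)$ for every $t\in[0,T]$, so for each fixed $(x,t)$ the operator $H_{q(\cdot,t)}$ is reflectionless with spectrum $S$ and Dirichlet data $\{\mu_j(x,t)\}_{j\in J}$ defined by \eqref{eq:Dirichleteig2}. Since $q$ is a classical solution obeying \eqref{21jul3}, the function $q(\cdot,t)$ is $2n+1$ times differentiable in $x$ with bounded derivatives, so the expansion \eqref{originalexpansion} applies and, after multiplying by $2k$, gives $2k\,G(x,x,t;-k^2)=\sum_{\ell=0}^{n+1}\hat f_\ell(x,t)\,k^{-2\ell}+o(k^{-2n-2})$, valid nontangentially.

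For the second expansion, I would use the product representation of the diagonal Green's function of a reflectionless operator in terms of its spectrum and Dirichlet data (as in \cite{GHSZ,GH03}), whose logarithm, expanded as $z=-k^2\to-\infty$, is a power series in $k^{-2}$ whose coefficients are precisely the power sums $Q_m$ from \eqref{eq:higherordertrace}. Concretely, writing $\log\big(2k\,G(x,x,t;-k^2)\big)=\sum_{m\ge 1}\frac{Q_m(\varphi(x,t))}{2m}\,k^{-2m}$ (after fixing the branch of the square roots by the leading behavior $G\sim\frac1{2k}$ and matching signs with the trace formula), exponentiation together with the elementary generating-function identity
\[
\sum_{m\ge0}R_m\,s^m=\exp\!\Big(\sum_{k\ge1}\tfrac{Q_k}{2k}\,s^k\Big),
\]
which is exactly the definition \eqref{eq:Rmdefn} of $R_m$ read off coefficientwise, yields $2k\,G(x,x,t;-k^2)=\sum_{m\ge0}R_m(\varphi(x,t))\,k^{-2m}+o(k^{-2n-2})$. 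Comparing this with the expansion of the previous paragraph and invoking uniqueness of asymptotic expansions in a common nontangential sector gives $\hat f_m(x,t)=R_m(\varphi(x,t))$ for all $0\le m\le n$, the range being dictated by the order $o(k^{-2n-3})$ in \eqref{originalexpansion}.

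The main obstacle is purely analytic: justifying the termwise expansion of the (generally infinite) product and of its logarithm under only the moment condition \eqref{eq:Craig5}, since the gaps may accumulate at finite points. Here I would control the tails of the sums over $j\in J$ entering each coefficient $Q_m$ for $m\le n+1$ using the bound \eqref{ineq:BDGLestimate} together with the estimates of Lemma~\ref{lem:estimate1}, and show that the remainder after $n+1$ terms is genuinely $o(k^{-2n-2})$ uniformly in $(x,t)$. The continuity of $Q_k$ and $R_m$ on $\cD(S)$, already recorded above, guarantees that the coefficients are well defined; the remaining work is to make the interchange of the sum over $j$ with the asymptotic expansion in $k$ rigorous and to track the square-root branches so that the signs in the two expansions agree. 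Once this bookkeeping is in place, matching coefficients finishes the proof.
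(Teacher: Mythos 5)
Your proposal follows essentially the same route as the paper: the paper likewise plays the expansion \eqref{originalexpansion} off against a second expansion of $G(x,x,t;z)$ whose exponent carries the moments $\sum_k Q_k/(2kz^k)$, and then reads off $R_m$ by exponentiation (your generating-function identity is exactly the definition \eqref{eq:Rmdefn}). The only difference is packaging: the paper works with the exponential Herglotz representation \eqref{eq:HerglotzGreen} --- whose exponent is precisely the logarithm of your product formula \eqref{eq:Greenproduct}, with the reflectionless property forcing the density $f(\xi)$ to be a step function supported on the gaps --- so your ``main obstacle'' (justifying the termwise expansion of the infinite product) collapses to a single integral estimate, $2k\int_{\underline E}^\infty \xi^{k-1}f(\xi)\,d\xi = Q_k - \underline{E}^k$ together with a uniform bound on $\bigl|\frac{1}{\xi-z}+\sum_{k=1}^n \frac{\xi^{k-1}}{z^k}\bigr|$ in a nontangential sector, the error being finite exactly because $Q_{n+1}<\infty$ under \eqref{eq:Craig5}.
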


These are higher-order trace formulas; the case $m=1$ is precisely \eqref{21jul4}. We also emphasize that the above lemma represents $\hat{f}_n$ as a polynomial in the moments $Q_k$.

  \begin{proof}
      As noted \cite[Equation 2.7]{EVY18}, the diagonal Green's function has an
      exponential Herglotz
      representation. The representation takes the form
      \begin{align}
        \label{eq:HerglotzGreen}
        G(x,x,t;z)&=\frac{1}{2\sqrt{z-\underline{E}}}e^{-\int_{\underline{E}}^\infty \frac{f(\xi)}{\xi-z}\dd \xi}
      \end{align}
      again with the positive real axis is chosen as the branch cut for the square root,
      and where
      \begin{align*}
        f(\xi)&=\frac{1}{2}-\frac1\pi\arg G(x,x,t;\xi+i0).
      \end{align*}
      The reflectionless property of the potential yields
      \begin{align*}
        f(\xi)&=\begin{cases}
            0,&\xi\in S\\
            -\frac12,&\xi\in (E_{j}^-,\mu_j)\\
            \frac12,&\xi\in (\mu_j,E_{j}^+)
        \end{cases}
      \end{align*}
By the moment condition \eqref{eq:Craig5}, for $k=1,\dots,n+1$,
\[
2k \int_{\underline E}^\infty \xi^{k-1} f(\xi)\,d\xi = Q_k - \underline{E}^k
\]
Assuming $\epsilon < \arg(z-\underline E) < 2 \pi - \epsilon$ for some $\epsilon > 0$  and $\xi\geq \underline{E}$, we can estimate
\[
\left\lvert \frac 1{\xi-z} + \sum_{k=1}^n \frac{\xi^{k-1}}{z^k} \right\rvert = \left \lvert \frac{ \frac{\xi^n}{z^n}}{\xi-z} \right\rvert \le \left( 1+ \frac 1{\sin \epsilon} \right) \left\lvert \frac{\xi^{n}}{z^{n} (z-\underline E)} \right\rvert.
\]
Multiplying by $f(\xi)$ and integrating implies that
      \begin{align*}
      &\left| \int_{\underline{E}}^\infty\frac{1}{\xi-z}f(\xi)\dd \xi + \sum_{k=1}^n \frac{Q_k-\underline{E}^k}{2kz^k} \right|
      \le  \left( 1+ \frac 1{\sin \epsilon} \right) \frac{\lvert Q_{n+1} - \underline{E}^{n+1}\rvert}{|z|^{n} \lvert z-\underline{E} \rvert},
      \end{align*}
which in turn yields the asymptotic expansion
      \begin{equation}\label{3aug1}
        G(x,x,t;z) = \frac{1}{2\sqrt{z-\underline{E}}}e^{\sum_{k=1}^{n}\frac{Q_{k}-\underline{E}^k}{2kz^{k}}+o(z^{-n})}
      \end{equation}
      as $z \to \infty$, $\arg (z-\underline{E}) \in (\epsilon,2\pi-\epsilon)$.

In order to compare coefficients to the expansion in \eqref{originalexpansion}, we write
\[
\frac{1}{2\sqrt{z-\underline{E}}} = \frac{1}{2\sqrt{z}}e^{- \frac 12 \log(1-\frac{\underline{E}}{z})}=\frac{1}{2\sqrt{z}} e^{\sum_{k=1}^n\frac{\underline{E}^k}{2kz^k}+o(z^{-n})}
\]
and combine with \eqref{3aug1} to find
      \begin{align*}
        G(x,x,t;z)&=\frac{1}{2\sqrt{z}}e^{\sum_{k=1}^{n}\frac{Q_{k}}{2kz^{k}}+o(z^{-n})}\\
        &=
        \frac{1}{2}\sum_{m=0}^n \frac{1}{z^{m+\frac12}}\sum_{\substack{\alpha\in \N_0^n\\ \sum_{k=1}^m k\alpha_k=m}}
        \prod_{k=1}^n\frac{Q_k^{\alpha_k}}{\alpha_k!(2k)^{\alpha_k}}
        +o(z^{-n-\frac 12})
      \end{align*}
Comparing coefficients to the expansion in \eqref{originalexpansion} completes the proof.
      \end{proof}

The equation \eqref{eq:HerglotzGreen} also yields a product formula for the diagonal
Green's function common in the literature:

\begin{align}
\label{eq:Greenproduct}
G(x,x,t;z)=\frac12\sqrt{\frac{1}{\underline{E}-z}\prod_{\ell\in J}\frac{(\mu_{\ell}(x,t)-z)^2}{(E_{\ell}^{-}-z)(E_{\ell}^+-z)} }.
\end{align}

\section{Time evolution of Dirichlet eigenvalues}

Now we will derive the time evolution for $\varphi_j$ in terms of a Lipschitz
  vector field on the torus $\mathcal{D}(S)$ equipped with the metric \eqref{eq:metric}.

  \begin{lem}
      \label{lem:varphievolution}
      Suppose $q(x,t)$ a solution to \eqref{eq:kdvn}, \eqref{eq:initial} satisfying
      \eqref{21jul3}. Let $(\varphi_j(x,t))_{j\in J}$
      be the corresponding Dirichlet data
      defined in \eqref{eq:Dirichleteig}, \eqref{eq:coordinates1} and \eqref{eq:coordinates2}.
      If $(x,t)$ is such
      that
      $\varphi_j(x,t)\notin \pi \Z$, then
      \begin{align}
        \partial_t\varphi_j(x,t)=\Xi_j(\varphi(x,t)).
      \end{align}
      \end{lem}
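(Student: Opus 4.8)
The plan is to characterize $\mu_j(x,t)$ implicitly through the equation $G(x,x,t;\mu_j(x,t))=0$ coming from \eqref{eq:Dirichleteig2}, and to differentiate this relation in $t$. The first ingredient is the time evolution of the diagonal Green's function $G = M_{11}$. Writing $m_3 = M_{12}$ as in the proof of Lemma~\ref{Mmatrix}, the matrix identity $\partial_t M = PM + MP^\top$ of Lemma~\ref{Mmatrix} yields $\partial_t G = 2P_{11}G + 2P_{12}m_3$; evaluating at $z = \mu_j$, where $G$ vanishes, gives $\partial_t G|_{z=\mu_j} = 2 P_{12}(x,t;\mu_j)\, m_3(x,t;\mu_j)$. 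The identical computation applied to the spatial evolution $\partial_x M = QM + MQ^\top$ (which follows from the spatial Riccati equations $\partial_x m_\pm = \pm(q-z)\mp m_\pm^2$ exactly as in Lemma~\ref{Mmatrix}), together with $Q_{11}=0$ and $Q_{12}=1$, gives the companion formula $\partial_x G|_{z=\mu_j} = 2 m_3(x,t;\mu_j)$.

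Next I would implicitly differentiate $G(x,x,t;\mu_j(x,t)) = 0$. The product formula \eqref{eq:Greenproduct} shows that $G$ has a simple zero at $\mu_j$ inside the open gap $(E_j^-,E_j^+)$, so $\partial_z G|_{z=\mu_j}\ne 0$ and the implicit function theorem guarantees that $\mu_j$ is differentiable with $\partial_t\mu_j = -\partial_t G|_{\mu_j}/\partial_z G|_{\mu_j}$ and $\partial_x\mu_j = -\partial_x G|_{\mu_j}/\partial_z G|_{\mu_j}$. Substituting the two formulas above, the common factor $-2 m_3(\mu_j)/\partial_z G|_{\mu_j}$ factors out and yields the clean relation $\partial_t\mu_j = P_{12}(x,t;\mu_j)\,\partial_x\mu_j$, with no division by quantities that could vanish on the region where $\varphi_j\notin\pi\bbZ$.

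It then remains to identify the scalar $P_{12}(x,t;\mu_j)$ and to convert from $\mu_j$ to $\varphi_j$. Since $P_{12} = \hat F_n(z) = \sum_{\ell=0}^n \hat f_{n-\ell} z^\ell$, the trace formulas of Lemma~\ref{lem:compfn} (together with $\hat f_0 = 1 = R_0$) give $P_{12}(x,t;\mu_j) = \sum_{\ell=0}^n R_{n-\ell}\mu_j^\ell$, which is exactly the prefactor in the definition \eqref{eq:Xidef} of $\Xi_j$. Finally, because $\mu_j$ depends on $\varphi_j$ through \eqref{eq:coordinates1} alone, the chain rule gives $\partial_t\mu_j = (d\mu_j/d\varphi_j)\,\partial_t\varphi_j$ and $\partial_x\mu_j = (d\mu_j/d\varphi_j)\,\partial_x\varphi_j$; on the set $\varphi_j\notin\pi\bbZ$ one has $d\mu_j/d\varphi_j\ne 0$, so $\partial_t\mu_j = P_{12}(\mu_j)\,\partial_x\mu_j$ reduces to $\partial_t\varphi_j = P_{12}(\mu_j)\,\partial_x\varphi_j$. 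Invoking the established translation flow $\partial_x\varphi_j = \Psi_j$ from \eqref{eq:translationflow} then produces $\partial_t\varphi_j = (\sum_{\ell=0}^n R_{n-\ell}\mu_j^\ell)\Psi_j = \Xi_j$, as claimed.

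The main obstacle I anticipate is the justification of the implicit differentiation and the regularity it requires: one must confirm that $G(x,x,t;z)$ is jointly smooth enough in $(x,t,z)$ for $\mu_j(x,t)$ to be differentiable, and that $\partial_z G|_{z=\mu_j}\ne 0$, which rests on the strict monotonicity of $G$ in the gap noted in the introduction and on the simple-zero structure read off from \eqref{eq:Greenproduct}. Once the evolution equations for $M$ and the trace formula are in hand, the remaining steps are purely algebraic.
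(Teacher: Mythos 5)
Your proposal is correct, and it shares the paper's skeleton — implicit differentiation of $G(x,x,t;\mu_j(x,t))=0$, the $M$-matrix evolution of Lemma~\ref{Mmatrix} giving $\partial_t G|_{z=\mu_j}=2P_{12}(\mu_j)m_3(\mu_j)$, and the trace formulas of Lemma~\ref{lem:compfn} to identify $P_{12}(\mu_j)=\hat F_n(\mu_j)=\sum_{\ell=0}^n R_{n-\ell}\mu_j^\ell$ — but it completes the argument by a genuinely different route. The paper evaluates everything explicitly: it uses $2m_3(\mu_j)=-\sigma_j$ (which is precisely the definition of $\sigma_j$, since $\partial_x G=2m_3$) to write $\partial_t G|_{\mu_j}=-\sigma_j\hat F_n(\mu_j)$, differentiates the product formula \eqref{eq:Greenproduct} to compute $\partial_z G|_{\mu_j}$, and reassembles the factor $\Psi_j$ from scratch out of these pieces. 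You instead derive the spatial companion identity $\partial_x M=QM+MQ^\top$ (correct; it follows from the Riccati equations exactly as you say, and you only need its $(1,1)$ entry $\partial_x G=2m_3$), observe that $m_3(\mu_j)$ and $\partial_z G|_{\mu_j}$ cancel in the ratio, obtain the clean relation $\partial_t\varphi_j=P_{12}(\mu_j)\,\partial_x\varphi_j$, and then import the translation flow $\partial_x\varphi_j=\Psi_j$ from \eqref{eq:translationflow}, which the paper cites as established in \cite{C89,BDGL18}. What your route buys: it avoids all sign bookkeeping with $\sigma_j$ and the square roots, never needs the explicit value of $\partial_z G|_{\mu_j}$ (only its nonvanishing, i.e.\ the simple-zero structure), and makes transparent the structural fact that the KdV-$n$ flow on Dirichlet data is the translation flow modulated by the scalar $P_{12}(\mu_j)$. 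What the paper's route buys: the proof is self-contained at this point and does not lean on the external result \eqref{eq:translationflow} inside the argument; your version quietly requires that $q(\cdot,t)\in\cR(S)$ so that the cited translation-flow result applies to the potential at time $t$, but this is available from Prop.~\ref{reflectionlessness} and Cor.~\ref{cor:timeinvariance}, and the paper needs the same facts anyway to invoke \eqref{eq:Greenproduct} and Lemma~\ref{lem:compfn}. Your regularity caveats (joint smoothness of $G$ in $(x,t,z)$, nonvanishing of $\partial_z G$ in the open gap, local constancy of $\sigma_j$ so that $\varphi_j$ inherits differentiability from $\mu_j$) are handled at the same level of detail as in the paper's own proof, so no gap there.
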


  \begin{proof}
    By the definitions \eqref{eq:coordinates1}, \eqref{eq:coordinates2} and \eqref{eq:Dirichleteig}, if $\varphi_j(x,t)\notin \pi \Z$, then
    \begin{align*}
    G(x,x,t;\mu_j(x,t))=0
    \end{align*}
so by the implicit function theorem,
    \begin{align*}
      \frac{\partial\mu_j}{\partial t}=-\frac{\partial G(x,x,t;z)/\partial t\vert_{z=\mu_j(x,t)}}{\partial G(x,x,t;z)/\partial z\vert_{z=\mu_j(x,t)}}.
    \end{align*}
Using \eqref{eq:coordinates1} and \eqref{eq:coordinates2} we have differentiability of $\varphi_j$ and the explicit expression
    \begin{align*}
      \frac{\partial\varphi_j}{\partial t}=
      \frac{\partial\mu_j/\partial t}{-\sigma_j(x,t)\sqrt{(E_j^+-\mu_j)(\mu_j-E_j^{-})}}.
    \end{align*}
    Noting that $G(x,x,t;z)=m_1$, the upper left entry of the Weyl $M$-matrix $M(x,t;z)$, we read off the time evolution for
    the diagonal Green's function from Lemma~\ref{Mmatrix},
    \begin{align*}
      \partial_tm_1(x,t,z)=2P_{12}m_3(x,t,z)+2P_{11}m_1(x,t,z).
    \end{align*}
    Then, plugging in $z=\mu_j(x,t)\in \C\setminus S$ in the above we see
    \begin{align*}
      \partial_tm_1(x,t,\mu_j(x,t))=-\sigma_j(x,t)\hat{F}_n(\mu_j(x,t)).
    \end{align*}
    Differentiating the product formula \eqref{eq:Greenproduct}  in $z$, we find
    \begin{align*}
      \partial G(x,x,t;z)/\partial z\vert_{z=\mu_j(x,t)}&=
      \frac12 \sqrt{\frac{1}{(\underline E - \mu_j)(E_j^{-}-\mu_j)(E_j^{+}-\mu_j)}
      \prod_{l\ne j}\frac{(\mu_l-\mu_j)^2}{(E_l^{-}-\mu_j)(E_l^{+}-\mu_j)}  }.
    \end{align*}
    So, in combination, we have
    \[
          \frac{\partial\varphi_j}{\partial t}=\left(\sum_{\ell=0}^nR_{n-\ell}(\mu_j(x,t))^\ell\right)\Psi_j. \qedhere
  \]
    \end{proof}

  We now note that by arguments identical to those found in \cite[Lemma 3.4, Lemma 3.5]{BDGL18} show that the above evolution equation for $\varphi_j$ holds also for
  $\varphi_j(x,t)\in \pi\Z$, i.e. at the gap edges.
  Indeed, Lemma 3.4 of \cite{BDGL18}
  relies only on continuity of $G(t,x,x,z)$ in $z$ and $t$
  to prove $\varphi_j(x,t)$ is continuous in $t$.
  Lemma 3.5 of \cite{BDGL18} is an intermediate result which relies only on
  the continuity of the vector field $\Xi_j$ in time.
  This hypothesis is clearly satisfied in our case once we note that
  each $R_n$ is a polynomial in the
  $Q_n$, which are continuous in the $\mu_j$ by \eqref{eq:Craig1}.
  In conclusion, we have the following result.

  \begin{prop}
      \label{lem:varphievolution2}
      Suppose $q(x,t)$ a solution to \eqref{eq:kdvn}, \eqref{eq:initial} satisfying
      \eqref{21jul3}. Let $(\varphi_j(x,t))_{j\in J}$
      be the corresponding Dirichlet data
      defined in \eqref{eq:Dirichleteig}, \eqref{eq:coordinates1} and \eqref{eq:coordinates2}.
      For any $(x,t)\in \R^2$ and $j\in J$,
      \begin{equation}\label{5aug1}
        \partial_t\varphi_j(x,t)=\Xi_j(\varphi(x,t)).
      \end{equation}
    \end{prop}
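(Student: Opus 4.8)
The plan is to promote Lemma~\ref{lem:varphievolution}, which establishes \eqref{5aug1} on the open set $\{(x,t) : \varphi_j(x,t)\notin\pi\Z\}$, to all of $\R^2$ by a continuity argument, following the pattern of \cite[Lemma 3.4, Lemma 3.5]{BDGL18}. The only points that remain are the gap-edge times $\varphi_j(x,t)\in\pi\Z$, i.e.\ $\mu_j(x,t)\in\{E_j^-,E_j^+\}$, where the implicit-function-theorem computation of Lemma~\ref{lem:varphievolution} breaks down because the change of coordinates \eqref{eq:coordinates1} from $\mu_j$ to the angular variable $\varphi_j$ becomes singular (the factor $\sqrt{(E_j^+-\mu_j)(\mu_j-E_j^-)}$ vanishes). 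I fix $x$ and $j$ and view $t\mapsto\varphi_j(x,t)$ as a scalar function on $[0,T]$.

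The two facts feeding the argument are continuity of $t\mapsto\varphi_j(x,t)$ and continuity of $t\mapsto\Xi_j(\varphi(x,t))$. For the first, the diagonal Green's function $G(x,x,t;z)=-1/(m_-+m_+)$ is jointly continuous in $(t,z)$: the Weyl $m$-functions evolve by the Riccati flow underlying Lemma~\ref{Mmatrix} (hence are continuous in $t$) and are analytic in $z$, while by Corollary~\ref{cor:timeinvariance} the spectrum and its gap structure are $t$-independent. Through the product formula \eqref{eq:Greenproduct} this forces $\mu_j(x,t)$ and, together with $\sigma_j(x,t)$, the angular variable $\varphi_j(x,t)$ to be continuous in $t$; this is exactly the content of \cite[Lemma 3.4]{BDGL18}, which uses only continuity of $G$ in $z$ and $t$. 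For the second, under the standing moment condition \eqref{eq:Craig5} the scalar fields $R_m$ are continuous on $\cD(S)$ by Lemma~\ref{lem:estimate1}, and $\mu_j^\ell$ and the translation field $\Psi_j$ are continuous in $\varphi$, so by \eqref{eq:Xidef} the composite $s\mapsto\Xi_j(\varphi(x,s))$ is continuous.

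With these in hand, the gluing proceeds as in \cite[Lemma 3.5]{BDGL18}. On the open set $U=\{s:\varphi_j(x,s)\notin\pi\Z\}$, Lemma~\ref{lem:varphievolution} gives $\partial_s\varphi_j=\Xi_j(\varphi)$, so the difference $D(t):=\varphi_j(x,t)-\varphi_j(x,t_0)-\int_{t_0}^t\Xi_j(\varphi(x,s))\,ds$ is locally constant on $U$; since $D$ is continuous on $[0,T]$ and the integral term is $C^1$ by the continuity just established, $D$ extends as a constant to the closures of the components of $U$. Granting the control of $D$ on the remaining set $Z=[0,T]\setminus U$ discussed below, one concludes $D\equiv0$, i.e.\ the integral identity $\varphi_j(x,t)=\varphi_j(x,t_0)+\int_{t_0}^t\Xi_j(\varphi(x,s))\,ds$ holds for every $t$, and differentiating yields \eqref{5aug1} at the gap edges as well.

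The main obstacle is precisely this control on $Z$. At a gap-edge time $t_*\in Z$ with $\Xi_j(\varphi(x,t_*))\neq0$, the trajectory crosses the edge transversally in the smooth coordinate $\varphi_j$, so $t_*$ is isolated in $Z$ and a one-sided derivative argument suffices; moreover $\varphi_j$ cannot remain at an edge on a whole subinterval unless $\Xi_j$ vanishes there, in which case $D$ is constant on that subinterval as well. The delicate possibility is an accumulation of gap-edge times, and it is exactly here that one must lean on the integral formulation rather than a pointwise derivative. Since that formulation requires nothing beyond continuity of $\varphi_j$ and of $\Xi_j\circ\varphi$ in $t$---both verified above in the present KdV-$n$ setting---the argument of \cite[Lemma 3.5]{BDGL18} applies without change and closes the proof.
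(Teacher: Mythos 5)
Your proposal is correct and takes essentially the same approach as the paper: the paper also promotes Lemma~\ref{lem:varphievolution} to the gap edges by verifying exactly the two continuity hypotheses you identify (continuity of $t\mapsto\varphi_j(x,t)$ via continuity of $G(x,x,t;z)$ in $z$ and $t$, and continuity of $t\mapsto\Xi_j(\varphi(x,t))$ via continuity of the $Q_k$ and hence the $R_m$) and then invoking the arguments of \cite[Lemma 3.4, Lemma 3.5]{BDGL18}. Your additional sketch of the gluing argument, with the delicate case of accumulating gap-edge times deferred to \cite{BDGL18}, matches the paper's reliance on that same reference for that same step.
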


    Now that we know \eqref{5aug1} holds for all $x,t$, we can view this collection of equations indexed by $j$ as an autonomous system of ODEs on $\cD(S)$, generated by the vector field $\Xi$. We now establish sufficient conditions for this vector field to be Lipschitz on the torus: this will allow us to conclude uniqueness of solution of \eqref{5aug1}; as is already established, the solution to \eqref{eq:kdvn}, \eqref{eq:initial} is uniquely recovered from the Dirichlet data.
    \begin{lem}
        \label{lem:Lipschitz}
        If the set $S = \sigma(H_V)$ obeys
        \eqref{eq:Craig1}, \eqref{eq:Craig2}, \eqref{eq:Craig3}, \eqref{eq:Craig4},
        the vector field $\Xi$ with components defined by \eqref{eq:Xidef}
        is a Lipschitz vector field on $\mathbb{T}^J$.
        \end{lem}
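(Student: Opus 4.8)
The plan is to establish the Lipschitz bound through a uniform estimate on the weighted partial derivatives of $\Xi$. Write $w_j=\gamma_j^{1/2}(1+\eta_{j,0}^n)^{1/2}$, so that the metric \eqref{eq:metric} is $\|\varphi-\tilde\varphi\|_{\cD(S)}=\sup_j w_j\|\varphi_j-\tilde\varphi_j\|_{\mathbb{T}}$. Each component $\Xi_j$ is smooth in every $\varphi_k$: in the angle variables $\mu_j=E_j^-+\gamma_j\cos^2(\varphi_j/2)$ stays in its gap, so the radicand $F_j$ of $\Psi_j=2\sqrt{F_j}$ from \eqref{eq:Psidef} is bounded away from $0$, and the $R_m$ are polynomials in the smooth fields $Q_k$. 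Thus, first proving the bound for $\varphi,\tilde\varphi$ differing in finitely many coordinates (via the mean value theorem along a coordinate path) and extending by continuity and the uniform estimates, the Lipschitz property reduces to
\[
L:=\sup_{j\in J} w_j \sum_{k\in J} \frac{1}{w_k}\,\sup_{\varphi\in\cD(S)}\Bigl|\frac{\partial \Xi_j}{\partial \varphi_k}\Bigr| <\infty .
\]

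Next I would record the pointwise bounds. Estimating each factor $\tfrac{(E_\ell^--\mu_j)(E_\ell^+-\mu_j)}{(\mu_\ell-\mu_j)^2}\le 1+\gamma_\ell/\eta_{j,\ell}$ and $\mu_j-\underline E\le\eta_{j,0}+\gamma_j$ gives $|\Psi_j|\le 2C_j$. Since $\underline E\le\mu_j\le\underline E+\eta_{j,0}+\gamma_j$, writing $A_j:=\sum_{\ell=0}^n R_{n-\ell}\mu_j^\ell$ (so $\Xi_j=A_j\Psi_j$), the bound $|R_m|\le M_1$ from Lemma~\ref{lem:estimate1} yields $|A_j|\lesssim 1+\eta_{j,0}^n$. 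For the derivatives I would use $\partial_{\varphi_k}\mu_j=0$ for $k\ne j$, $|\partial_{\varphi_j}\mu_j|\le\gamma_j/2$, the bound \eqref{7aug1}, and the logarithmic derivative of $\Psi_j$: off-diagonally $\partial_{\mu_k}\Psi_j=-\Psi_j/(\mu_k-\mu_j)$ gives $|\partial_{\varphi_k}\Psi_j|\le C_j\gamma_k/\eta_{j,k}$, while on the diagonal $\partial_{\mu_j}\log F_j=\tfrac{1}{\mu_j-\underline E}+\sum_{\ell\ne j}\bigl(\tfrac{E_\ell^--\mu_\ell}{(\mu_\ell-\mu_j)(E_\ell^--\mu_j)}+\tfrac{E_\ell^+-\mu_\ell}{(\mu_\ell-\mu_j)(E_\ell^+-\mu_j)}\bigr)$, so that $|\partial_{\varphi_j}\Psi_j|\lesssim C_j\gamma_j\bigl(\eta_{j,0}^{-1}+\sum_{\ell\ne j}\gamma_\ell\eta_{j,\ell}^{-2}\bigr)$. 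Likewise \eqref{7aug1} combined with $|\mu_j|^\ell\lesssim 1+\eta_{j,0}^n$ gives $|\partial_{\varphi_k}A_j|\lesssim\gamma_k(1+\eta_{k,0}^n)(1+\eta_{j,0}^n)$ for $k\ne j$ and $|\partial_{\varphi_j}A_j|\lesssim\gamma_j(1+\eta_{j,0}^n)$.

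I would then expand $\partial_{\varphi_k}\Xi_j=(\partial_{\varphi_k}A_j)\Psi_j+A_j(\partial_{\varphi_k}\Psi_j)$ and split the sum defining $L$ into four groups, one per hypothesis. The off-diagonal $A$-derivative terms collapse to $\bigl[\sup_j C_j\gamma_j^{1/2}(1+\eta_{j,0}^n)^{3/2}\bigr]\bigl[\sum_k\gamma_k^{1/2}(1+\eta_{k,0}^n)^{1/2}\bigr]$, finite by \eqref{eq:Craig4} and \eqref{eq:Craig1}; the diagonal $A$-derivative term is $\sup_j|(\partial_{\varphi_j}A_j)\Psi_j|\lesssim\sup_j\gamma_j(1+\eta_{j,0}^n)C_j$, finite by \eqref{eq:Craig4} since $\gamma_j$ is bounded. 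The off-diagonal $\Psi$-derivative terms give exactly $\sup_j C_j(1+\eta_{j,0}^n)^{3/2}\sum_{k\ne j}\tfrac{\gamma_j^{1/2}\gamma_k^{1/2}}{\eta_{j,k}}$, which is \eqref{eq:Craig2}; and the diagonal $\Psi$-derivative term splits into the near-bottom piece $\sup_j\tfrac{\gamma_j(1+\eta_{j,0}^n)C_j}{\eta_{j,0}}$, namely \eqref{eq:Craig3}, and a remainder $\sup_j(1+\eta_{j,0}^n)C_j\gamma_j\sum_{\ell\ne j}\gamma_\ell\eta_{j,\ell}^{-2}$ again controlled by \eqref{eq:Craig2} (using that each $\gamma_j^{1/2}\gamma_\ell^{1/2}/\eta_{j,\ell}$ is uniformly bounded).

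The main obstacle is the off-diagonal coupling together with the near-bottom singularity: the factors $1/(\mu_k-\mu_j)$ and $1/(\mu_j-\underline E)$ are precisely where accumulation of spectral gaps is felt, and bounding their contributions uniformly in $j$ is what forces the summability and supremum conditions \eqref{eq:Craig2} and \eqref{eq:Craig3}. The new difficulty relative to Craig's treatment of $\Psi$ in \cite{C89} is the prefactor $A_j$, whose size grows like $\eta_{j,0}^n$; this is what necessitates the weights $(1+\eta_{j,0}^n)^{1/2}$ in the metric and the strengthened exponents in \eqref{eq:Craig1}--\eqref{eq:Craig4} compared with the translation flow and the KdV case $n=1$.
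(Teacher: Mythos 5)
Your overall architecture coincides with the paper's proof of Lemma~\ref{lem:Lipschitz}: the reduction to a weighted supremum of $\sum_k\sup|\partial\Xi_j/\partial\varphi_k|$, the bounds $|\Psi_j|\le 2C_j$ and $|\partial_{\varphi_k}\Psi_j|\le C_j\gamma_k/\eta_{j,k}$ for $k\ne j$, the product-rule split $\partial\Xi_j=(\partial A_j)\Psi_j+A_j(\partial\Psi_j)$, and the assignment of \eqref{eq:Craig1}--\eqref{eq:Craig4} to the four resulting groups of terms are all the same. However, there is a genuine gap in your diagonal estimate. You bound the $\ell$-th summand of $\partial_{\mu_j}\log F_j$ crudely by $2\gamma_\ell/\eta_{j,\ell}^2$, and you must then control $\sup_j C_j(1+\eta_{j,0}^n)\gamma_j\sum_{\ell\ne j}\gamma_\ell\eta_{j,\ell}^{-2}$; your justification is that each ratio $\gamma_j^{1/2}\gamma_\ell^{1/2}/\eta_{j,\ell}$ is uniformly bounded. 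That claim does not follow from \eqref{eq:Craig1}--\eqref{eq:Craig4} and is false in general: the conditions permit gaps much wider than the bands separating them, provided such configurations accumulate at $\underline{E}$. Concretely, for each $m$ take two gaps of width $\gamma^{(m)}=100^{-3m/4}$ separated by a band of width $\eta_m=100^{-m}$, the lower gap lying at distance $\gamma^{(m)}$ from $\underline{E}$ (cross-pair interactions contribute only bounded factors). Then $C_j\approx(\gamma^{(m)})^{1/2}(\gamma^{(m)}/\eta_m)^{1/2}=100^{-m/4}$, and one checks all four hypotheses hold --- e.g.\ the left side of \eqref{eq:Craig2} is $\approx 100^{-m/4}\cdot 100^{m/4}=O(1)$, and \eqref{eq:Craig3} is saved because $\eta_{j,0}\approx\gamma_j$ --- while $\gamma_j^{1/2}\gamma_k^{1/2}/\eta_{j,k}=\gamma^{(m)}/\eta_m=100^{m/4}\to\infty$. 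Worse, the quantity your bound actually requires, $C_j\gamma_j\gamma_k/\eta_{j,k}^2\approx 100^{m/4}$, also diverges, so the crude $\eta_{j,\ell}^{-2}$ route fails outright, not merely its justification.

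The missing idea is the cancellation the paper exploits at this point: as $\mu_\ell$ runs over its gap, the combination $\frac{2}{\mu_\ell-\mu_j}-\frac{1}{E_\ell^+-\mu_j}-\frac{1}{E_\ell^--\mu_j}$ is monotone and extremal at the gap edges, which gives
\begin{equation*}
\left|\frac{2}{\mu_\ell-\mu_j}-\frac{1}{E_\ell^+-\mu_j}-\frac{1}{E_\ell^--\mu_j}\right|\le\frac{\gamma_\ell}{\eta_{j,\ell}(\eta_{j,\ell}+\gamma_\ell)},
\end{equation*}
smaller than your bound by the factor $\eta_{j,\ell}/(\eta_{j,\ell}+\gamma_\ell)$. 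Precisely in the regime $\gamma_\ell\gg\eta_{j,\ell}$ of the example above this factor rescues the estimate: $C_j\gamma_j\,\frac{\gamma_k}{\eta_{j,k}(\eta_{j,k}+\gamma_k)}\approx C_j\gamma_j/\eta_{j,k}=O(1)$. Note that your own rearrangement into $\frac{E_\ell^--\mu_\ell}{(\mu_\ell-\mu_j)(E_\ell^--\mu_j)}+\frac{E_\ell^+-\mu_\ell}{(\mu_\ell-\mu_j)(E_\ell^+-\mu_j)}$ already contains this cancellation --- each numerator is small exactly when its accompanying denominator factor is small, and each fraction separately is maximized at a gap edge with value at most $\gamma_\ell/(\eta_{j,\ell}(\eta_{j,\ell}+\gamma_\ell))$ --- but you discarded it by bounding both denominator factors below by $\eta_{j,\ell}$. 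One further caution: the factor $(\eta_{j,\ell}+\gamma_\ell)^{-1}$ must be carried through to the final invocation of \eqref{eq:Craig2}; weakening at any later stage to $(\gamma_j^{1/2}\gamma_\ell^{1/2}/\eta_{j,\ell})^2$ reintroduces exactly the divergence exhibited above (indeed the last display of the paper's own proof of $L_2$ performs this weakening, and is vulnerable to the same example, so the sharp form of the bound is what one should keep).
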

    \begin{proof}
        Computing directly,
        \begin{align*}
        &\frac{\partial \Xi_j}{\partial \varphi_k}=
        \left( \sum_{\ell=0}^{n-1} \frac{\partial R_{n-\ell}}{\partial \varphi_k}\mu_j^\ell \right) \Psi_j
        +\left(\sum_{\ell=0}^n R_{n-\ell}\mu_j^\ell\right)\frac{\partial \Psi_j}{\partial \varphi_k},\;\;k\ne j\\
        &\frac{\partial \Xi_j}{\partial \varphi_j}=
        \left( \sum_{\ell=0}^{n-1} \frac{\partial R_{n-\ell}}{\partial \varphi_j}\mu_j^\ell +
         \sum_{\ell=1}^{n-1}\ell R_{n-\ell}\frac{\partial \mu_j}{\partial \varphi_j}\mu_j^{\ell -1}\right) \Psi_j+
        \left(\sum_{\ell=0}^n R_{n-\ell}\mu_j^\ell\right)\frac{\partial \Psi_j}{\partial \varphi_j}
        \end{align*}
        We will now show that $\left| \Psi_j\right|\leq 2C_j $; this follows from
        maximizing the smooth function
        $f(s)=\frac{(E_{\ell}^+-\mu_j)(E_{\ell}^{-}-\mu_j)}{(E_{\ell}^{-}+s\gamma_\ell-\mu_j)^2}$
        for $0\leq s\leq 1$. For $E_j^{+}<E_{\ell}^{-}$, we see that $f$ decreases
        and,
        \begin{align*}
        \frac{(E_\ell^{-}-\mu_j)(E_{\ell}^+-\mu_j)}{(\mu_j-\mu_{\ell})^2}&\leq
        \frac{E_{\ell}^+-\mu_j}{E_{\ell}^{-}-\mu_j}
        = 1+\frac{\gamma_\ell}{E_{\ell}^{-}-\mu_j}
        \leq
        1+\frac{\gamma_\ell}{\eta_{j,\ell}},
        \end{align*}
        and similarly for $E_j^{-}>E_{\ell}^{+}$, where $f$ increases.
        And, thus we have our bound on $\Psi_j$,
        \begin{align*}
        \Psi_j&=2\sqrt{(\mu_j-\underline{E})\prod_{\ell\ne j}\frac{(E_\ell^{-}-\mu_j)(E_{\ell}^+-\mu_j)}{(\mu_j-\mu_{\ell})^2}}\\
        &\leq
        2(\eta_{j,0}+\gamma_j)^{1/2}\prod_{\ell\ne j}\left(1+\frac{\gamma_\ell}{\eta_{j,\ell}}\right)^{1/2}\\
        &=2C_j.
        \end{align*}
        We will also need that $\left| \frac{\partial\Psi_j}{\partial \varphi_k}\right|\leq \frac{\gamma_k}{2\eta_{j,k}}C_j$
        when $k\ne j$. Note that
        $\left|\frac{\partial \mu_k}{\partial \varphi_k} \right|\leq \frac12 \gamma_k$
        by a differentiation of \eqref{eq:coordinates1}. Then we estimate,
        \begin{align*}
          \Psi_j&=2\sqrt{(\mu_j-\underline{E})\prod_{\ell\ne j,k}\frac{(E_\ell^{-}-\mu_j)(E_{\ell}^+-\mu_j)}{(\mu_j-\mu_{\ell})^2}}\cdot
          \sqrt{\frac{(E_k^{-}-\mu_j)(E_{k}^+-\mu_j)}{(\mu_j-\mu_{k})^2}}\\
          &\implies \left|\frac{\partial \Psi_j}{\partial \varphi_k}\right|=|\Psi_j|\frac{1}{|\mu_j-\mu_k|}\left|\frac{\partial \mu_k}{\partial \varphi_k}\right|
          \leq  C_j\frac{\gamma_k}{\eta_{j,k}}.
        \end{align*}
        Thus, for $k\ne j$,
        \begin{align}
          \label{ineq:partialk}
          \left| \frac{\partial \Xi_j}{\partial \varphi_k}\right|&\leq
        2C_j\sum_{\ell=0}^{n-1} \left|\frac{\partial R_{n-\ell}}{\partial \varphi_k}\right|(\lvert\underline{E}\rvert+\eta_{j,0}+\gamma_j)^\ell
        +\frac{\gamma_k}{\eta_{j,k}}C_j\sum_{\ell=0}^n |R_{n-\ell}|(\lvert\underline{E}\rvert+\eta_{j,0}+\gamma_j)^\ell.
        \end{align}
The second sum is bounded using the estimates in Lemma~\ref{lem:estimate1} as
        \begin{align*}
          \frac{\gamma_k}{\eta_{j,k}}C_j\sum_{\ell=0}^n |R_{n-\ell}|(\lvert\underline{E}\rvert+\eta_{j,0}+\gamma_j)^\ell
          &\leq\frac{\gamma_k}{\eta_{j,k}}C_jM_1\sum_{\ell=0}^n(\lvert\underline{E}\rvert+\eta_{j,0}+\gamma_j)^\ell\\
          &\leq \frac{\gamma_k}{\eta_{j,k}}(n+1)C_j M_1(1+(\lvert\underline{E}\rvert+\eta_{j,0}+\gamma_j)^n)\\
          &\leq \tilde{M}_1(1+(\eta_{j,0}+\gamma_j)^n)\frac{\gamma_k}{\eta_{j,k}}C_j
        \end{align*}
        where $\tilde{M}_1:=2^{n-1}(n+1)M_1(1+|\underline{E}|^n)$ and the last step uses \eqref{7aug2}.

        For the first sum in \eqref{ineq:partialk},
         we again use
        Lemma~\ref{lem:estimate1} and \eqref{7aug2}, as well as the inequality
        $1+x^k\leq 2(1+x^{n})$ for $0\leq k\leq n$, and $x> 0$.
        Indeed, we have
        \begin{align*}
          2C_j\sum_{\ell=0}^{n-1} \left|\frac{\partial R_{n-\ell}}{\partial \varphi_k}\right|(|\underline{E}|+\eta_{j,0}+\gamma_j)^\ell
          &\leq
          4M_3\gamma_kC_j(1+(\eta_{k,0}+\gamma_k)^n)\sum_{\ell=0}^{n-1} (|\underline{E}|+\eta_{j,0}+\gamma_j)^\ell \\
          &\leq 4M_3(1+|\underline{E}|^n)\gamma_kC_j(1+(\eta_{k,0}+\gamma_k)^n)\sum_{\ell=0}^{n-1} 2^{\ell}(1+(\eta_{j,0}+\gamma_j)^\ell) \\
          &\leq \tilde{M}_3\gamma_kC_j(1+(\eta_{k,0}+\gamma_k)^n)(1+(\eta_{j,0}+\gamma_j)^n)
        \end{align*}
        where $\tilde{M}_3:=2^{n+2}nM_3(1+|\underline{E}|^n)$.
        Combining our estimates with \eqref{ineq:partialk}, and applying
        the power mean inequality, we see for $k\ne j$,
        \begin{align*}
          \left|\frac{\partial \Xi_j}{\partial \varphi_k}\right|&\leq
          \tilde{M}_1C_j\frac{\gamma_k}{\eta_{j,k}} (1+(\eta_{j,0}+\gamma_j)^n)+
          \tilde{M}_3C_j\gamma_k(1+(\eta_{k,0}+\gamma_k)^n)(1+(\eta_{j,0}+\gamma_j)^n)\\
          &\leq \max\{ \tilde{M}_1,\tilde{M}_3\}C_j(1+(\eta_{j,0}+\gamma_j)^n)
          \left(\frac{\gamma_k}{\eta_{j,k}}
          +\gamma_k(1+(\eta_{k,0}+\gamma_k)^n) \right)\\
          &\leq 2^{n-1}\max\{ \tilde{M}_1,\tilde{M}_3\}C_j(1+\eta_{j,0}^n+\gamma_j^n)
          \left(\frac{\gamma_{k}}{\eta_{j,k}}
          +\gamma_{k}(1+2^{n-1}(\eta_{k,0}^n+\gamma_{k}^n)) \right)\\
          &\leq 2^{2n-2}\max\{ \tilde{M}_1,\tilde{M}_3\}(1+\sup_{m\in J}\gamma_m^n)C_j(1+\eta_{j,0}^n)
          \left(\frac{\gamma_{k}}{\eta_{j,k}}
          +(1+\sup_{m\in J}\gamma_m^n)\gamma_{k}(1+\eta_{k,0}^n) \right)\\
          &\leq 2^{2n-2}\max\{ \tilde{M}_1,\tilde{M}_3\}(1+\sup_{m\in J}\gamma_m^n)^2C_j(1+\eta_{j,0}^n)
          \left(\frac{\gamma_{k}}{\eta_{j,k}}
          +\gamma_{k}(1+\eta_{k,0}^n) \right)\\
          &\leq \tilde{M}C_j(1+\eta_{j,0}^n)
          \left(\frac{\gamma_{k}}{\eta_{j,k}}
          +\gamma_{k}(1+\eta_{k,0}^n) \right).
        \end{align*}
        Where we have defined $\tilde{M}:=2^{2n-2}\max\{ \tilde{M}_1,\tilde{M}_3\}(1+\sup_{m\in J}\gamma_m^n)^2$ in
        the above for
        convenience. Note that $\tilde{M}$ is finite by \eqref{eq:Craig1}, which implies $\sum_{m\in J}\gamma_m<\infty$ so that
        of course $\sup_{m\in J}\gamma_m^n<\infty$.

        For $k=j$, in addition to the estimates above, we will need to show the
        following bound on $\frac{\partial \Psi_j}{\partial \varphi_j}$,
        \[
        \left|\frac{\partial \Psi_j}{\partial \varphi_j} \right|\leq
        \frac{C_j\gamma_j}{2}\left( \frac{1}{\eta_{j,0}}+\sum_{\ell\ne j}\frac{\gamma_\ell}{\eta_{j,\ell}(\eta_{j,\ell}+\gamma_\ell)} \right)
        \]
        This follows by first, another maximization argument, which finds
        \begin{align*}
          \left|\frac{2}{\mu_\ell-\mu_j}-\frac{1}{E_{\ell}^+-\mu_j}-\frac{1}{E_{\ell}^{-}-\mu_j} \right|\leq
          \frac{\gamma_\ell}{\eta_{j,\ell}(\gamma_\ell+\eta_{j,\ell})}.
        \end{align*}
        The above justifies the termwise differentiation of the
        sum over the index $J$ and shows
        \begin{align*}
          \left|\frac{\partial \Psi_j}{\partial \varphi_j}\right|&=
          \left|\frac{\Psi_j}{2}\frac{\partial \mu_j}{\partial\varphi_j}\left(\frac{1}{\mu_j-\underline{E}}+
          \sum_{\ell\ne j} \left( \frac{2}{\mu_\ell-\mu_j}-\frac{1}{E_{\ell}^+-\mu_j}-
          \frac{1}{E_{\ell}^--\mu_j} \right) \right)\right|\\
          &\leq
          \frac{C_j\gamma_j}{2}\left( \frac{1}{\eta_{j,0}}+\sum_{\ell\ne j}
          \frac{\gamma_\ell}{\eta_{j,\ell}(\gamma_\ell+\eta_{j,\ell})}  \right).
        \end{align*}
        For the first term in $\frac{\partial \Xi_j}{\partial \varphi_j}$, we use \eqref{7aug2} to find
        \begin{align*}
        \sum_{\ell=0}^{n-1} \left|\frac{\partial R_{n-\ell}}{\partial \varphi_j}\right||\mu_j|^\ell
        &\leq M_3\gamma_j\sum_{\ell=0}^{n-1}(1+(\eta_{j,0}+\gamma_j)^{n-\ell})(|\underline{E}|+\eta_{j,0}+\gamma_j)^\ell\\
        &\leq 2^{n-1}M_3(1+|\underline{E}|^n)\gamma_j
        \sum_{\ell=0}^{n-1}(1+(\eta_{j,0}+\gamma_j)^{n-\ell})(1+(\eta_{j,0}+\gamma_j)^{\ell})\\
        &\leq 2^{n+1}nM_3 (1+|\underline{E}|^n)\gamma_j(1+(\eta_{j,0}+\gamma_j)^{n})
        \\
        &= \frac12\tilde{M}_3\gamma_j(1+(\eta_{j,0}+\gamma_j)^{n})
        \end{align*}
        since $(1+(\eta_{j,0}+\gamma_j)^{n-\ell})(1+(\eta_{j,0}+\gamma_j)^{\ell})\leq 4(1+(\eta_{j,0}+\gamma_j)^{n})$.
        Similar methods bound the remaining two terms:
        \begin{align*}
        \sum_{\ell=1}^{n-1}\ell|R_{n-\ell}|\left|\frac{\partial \mu_j}{\partial \varphi_j}\right||\mu_j|^{\ell -1}
        &\leq
        \frac{(n-1)M_1}{2}\gamma_j\sum_{\ell=1}^{n-1}(|\underline{E}|+\eta_{j,0}+\gamma_j)^{\ell -1} \\
        &\leq \frac{(n-1)M_1}{2}(1+|\underline{E}|^{n})\gamma_j\sum_{\ell=1}^{n-1}2^{\ell-1}(1+(\eta_{j,0}+\gamma_j)^{\ell -1}) \\
        &\leq 2^{n-1}\frac{(n-1)^2M_1}{2}(1+|\underline{E}|^{n})\gamma_j(1+(\eta_{j,0}+\gamma_j)^{n}) \\
        &\leq \frac{(n-1)}{2}\tilde{M}_1\gamma_j(1+(\eta_{j,0}+\gamma_j)^n)
        \end{align*}
        and
        \begin{align*}
        \sum_{\ell=0}^n |R_{n-\ell}||\mu_j|^\ell&\leq
        M_1\sum_{\ell=0}^n (|\underline{E}|+\eta_{j,0}+\gamma_j)^\ell\\
        &\leq M_1(1+|\underline{E}|^n)\sum_{\ell=0}^n 2^{\ell}(1+(\eta_{j,0}+\gamma_j)^\ell)\\
        &\leq 2^{n+1}M_1(n+1)(1+|\underline{E}|^n)(1+(\eta_{j,0}+\gamma_j)^n)\\
        &=4\tilde{M}_1(1+(\eta_{j,0}+\gamma_j)^n).
        \end{align*}

        Thus, we have
        \begin{align*}
          \left|\frac{\partial \Xi_j}{\partial \varphi_j}\right|&\leq
          2C_j\left( \sum_{\ell=0}^{n-1} \left|\frac{\partial R_{n-\ell}}{\partial \varphi_j}\right||\mu_j|^\ell +
           \sum_{\ell=1}^{n-1}\ell|R_{n-\ell}|\left|\frac{\partial \mu_j}{\partial \varphi_j}\right||\mu_j|^{\ell -1}\right) +
          \left(\sum_{\ell=0}^n |R_{n-\ell}||\mu_j|^\ell\right)\left|\frac{\partial \Psi_j}{\partial \varphi_j}\right|\\
          &\leq
          \tilde{M}_3C_j\gamma_j\left(1+(\eta_{j,0}+\gamma_j)^{n}\right)\\
          &\qquad+(n-1)\tilde{M}_1C_j\gamma_j(1+(\eta_{j,0}+\gamma_j)^n)\\
          &\qquad+2\tilde{M}_1C_j\gamma_j\left( \frac{1}{\eta_{j,0}}+\sum_{k\ne j}\frac{\gamma_k}{\eta_{j,k}(\eta_{j,k}+\gamma_k)} \right)
          (1+(\eta_{j,0}+\gamma_j)^n)\\
          &\leq \tilde{M}_32^{n-1}(\sup_{m\in J}\gamma_m^n+1)C_j\gamma_j\left(1+\eta_{j,0}^n\right)\\
          &\qquad +(n-1)\tilde{M}_12^{n-1}(\sup_{m\in J}\gamma_m^n+1)C_j\gamma_j(1+\eta_{j,0}^n)\\
          &\qquad+2\tilde{M}_12^{n-1}(\sup_{m\in J}\gamma_m^n+1)C_j\gamma_j\left( \frac{1}{\eta_{j,0}}+\sum_{k\ne j}\frac{\gamma_k}{\eta_{j,k}(\eta_{j,k}+\gamma_k)} \right)
          (1+\eta_{j,0}^n)\\
          &\leq 2n\tilde{M}C_j\gamma_j(1+\eta_{j,0}^n)\left( 1+\frac{1}{\eta_{j,0}}+
          \sum_{k\ne j}\frac{\gamma_k}{\eta_{j,k}(\eta_{j,k}+\gamma_k)} \right).
        \end{align*}

        Finally, we collect the above estimates and show $\Xi$ is Lipschitz.
        \begin{align*}
          \norm{\Xi(\varphi)-\Xi(\tilde{\varphi})}&=
          \sup_{j\in J}\gamma_j^{1/2}(1+\eta_{j,0}^n)^{1/2}\norm{\Xi_j(\varphi)-\Xi_j(\tilde{\varphi})}\\
          &\leq\sup_{j\in J}\gamma_j^{1/2}(1+\eta_{j,0}^n)^{1/2}\sum_{k\in J}\norm{\frac{\partial \Xi_j}{\partial \varphi_k}}_\infty \norm{\varphi_k-\tilde{\varphi}_k}\\
          &\leq \norm{\varphi-\tilde{\varphi}}\sup_{j\in J}\sum_{k\in J}\gamma_j^{1/2}(1+\eta_{j,0}^n)^{1/2}\gamma_{k}^{-1/2}(1+\eta_{k,0}^n)^{-1/2}
          \norm{\frac{\partial \Xi_j}{\partial \varphi_k}}_\infty\\
          &\leq \norm{\varphi-\tilde{\varphi}}\left(\sup_{j\in J}\gamma_j^{1/2}(1+\eta_{j,0}^n)^{1/2}\sum_{k\ne j}\gamma_{k}^{-1/2}(1+\eta_{k,0}^n)^{-1/2}
          \norm{\frac{\partial \Xi_j}{\partial \varphi_k}}_\infty
          +\sup_{j\in J}\norm{\frac{\partial \Xi_j}{\partial \varphi_j}}_\infty\right).
        \end{align*}
        We examine the sum over $k\ne j$ first.
        Using our estimates on $\norm{\frac{\partial \Xi_j}{\partial \varphi_k}}_\infty$ for
        $k\ne j$
        computed above, we see:
        \begin{align*}
          \sup_{j\in J}&\gamma_j^{1/2}(1+\eta_{j,0}^n)^{1/2}\sum_{k\ne j}\gamma_{k}^{-1/2}(1+\eta_{k,0}^n)^{-1/2}
          \norm{\frac{\partial \Xi_j}{\partial \varphi_k}}_\infty\\
          &\leq \tilde{M}\sup_{j\in J}C_j(1+\eta_{j,0}^n)^{3/2}\sum_{k\ne j}\frac{\gamma_{k}^{1/2}\gamma_j^{1/2}}{\eta_{j,k}(1+\eta_{k,0}^n)^{1/2}}\\
          &\qquad+\tilde{M}\sup_{j\in J}C_j\gamma_j^{1/2}(1+\eta_{j,0}^n)^{3/2}\sum_{k\ne j}\gamma_{k}^{1/2}(1+\eta_{k,0}^n)^{1/2}\\
          &\leq \tilde{M}\sup_{j\in J}C_j(1+\eta_{j,0}^n)^{3/2}\sum_{k\ne j}\frac{\gamma_{k}^{1/2}\gamma_j^{1/2}}{\eta_{j,k}(1+\eta_{k,0}^n)^{1/2}}\\
          &\qquad+\tilde{M}\left(\sum_{k\in J}\gamma_{k}^{1/2}(1+\eta_{k,0}^n)^{1/2}\right)\sup_{j\in J}C_j\gamma_j^{1/2}(1+\eta_{j,0}^n)^{3/2}\\
          &=:L_1
        \end{align*}
        where $L_1<\infty$ by assumption \eqref{eq:Craig2} on the first term, and \eqref{eq:Craig1} and
        \eqref{eq:Craig4} on the second.

        For the $k=j$ term, we take a supremum over our estimate on $\frac{\partial \Xi_j}{\partial\varphi_j}$,
        bounding as follows,
        \begin{align*}
          \sup_{j\in J } \left|\frac{\partial \Xi_j}{\partial \varphi_j}\right|&\leq
           2n\tilde{M}\sup_{j\in J}C_j\gamma_j(1+\eta_{j,0}^n)\left( 1+\frac{1}{\eta_{j,0}}+
          \sum_{k\ne j}\frac{\gamma_k}{\eta_{j,k}(\eta_{j,k}+\gamma_k)} \right)\\
          &\leq 2n\tilde{M}\left(\sup_{j\in J}C_j\gamma_j(1+\eta_{j,0}^n)+\sup_{j\in J}\frac{\gamma_j(1+\eta_{j,0}^n)C_j}{\eta_{j,0}}
          +\sup_{j\in J}C_j\gamma_j(1+\eta_{j,0}^n)\sum_{k\ne j}\frac{\gamma_k}{\eta_{j,k}(\eta_{j,k}+\gamma_k)}\right)\\
          &\leq 2n\tilde{M}\left(\sup_{j\in J}C_j\gamma_j(1+\eta_{j,0}^n)+\sup_{j\in J}\frac{\gamma_j(1+\eta_{j,0}^n)C_j}{\eta_{j,0}}
          +\sup_{j\in J}C_j(1+\eta_{j,0}^n)\sum_{k\ne j}{\left(\frac{\gamma_k^{1/2}\gamma_j^{1/2}}{\eta_{j,k}}\right)\!}^{\!2} \right)\\
          &=:L_2.
        \end{align*}
        In the above, the first term in $L_2$ is finite by \eqref{eq:Craig4}, once we note
        that \eqref{eq:Craig1} implies $\gamma_j<1$ for all but finitely many indices $j\in J$, so that
        for these indices, $\gamma_j<\gamma_j^{1/2}$.
        For the second term, we make use of \eqref{eq:Craig3},
        and for the final term, \eqref{eq:Craig2} suffices.

        In summary,
        \begin{align*}
          \norm{\Xi(\varphi)-\Xi(\tilde{\varphi})}\leq (L_1+L_2)\norm{\varphi-\tilde{\varphi}}
        \end{align*}
        and the vector field is Lipschitz.
        \end{proof}

  The proof of the remainder of Theorem~\ref{thm:Dubrovin}
  is now nothing more than collecting our results.

  \begin{proof}[Proof of Theorem~\ref{thm:Dubrovin}]
  Let $V:\R\to \R$ be such that $H_V$ is reflectionless and
  $\sigma(H_V)=S$, where $S$ obeys \eqref{eq:Craig1}, \eqref{eq:Craig2},
  \eqref{eq:Craig3}, and \eqref{eq:Craig4}. Associate to $V$ the angular variables
  $(\tilde{\varphi}_{j}(x))_{j\in J}\in \cR(S)$.

  Let $q(x,t)$ be a classical
  solution to \eqref{eq:kdvn},
  \eqref{eq:initial} satisfying boundedness condition \eqref{21jul3}.
  By Cor.~\ref{cor:timeinvariance} and Prop.~\ref{reflectionlessness}, we may associate
  angular variables $(\varphi_j(x,t))_{j\in J}\in \cR(S)$ at any time $t\in [0,T]$.
  Using the trace formula \eqref{eq:trace9}, this allows us to write
  \begin{align*}
  q(x,t)=\underline{E}+\sum_{j\in J} \left( E_{j}^++E_{j}^--2\mu_j(x,t) \right)
  \end{align*}

  By Prop.~\ref{lem:varphievolution2}, $\varphi_j(x,t)$ satisfies
  \begin{align*}
  \partial_t\varphi_j(x,t)=\Xi_j(\varphi)
  \end{align*}
  where $\Xi$ is Lipschitz by Lemma~\ref{lem:Lipschitz}. Thus, setting the initial
  condition
  $(\varphi_j(x,0))_{j\in J}=(\tilde{\varphi}_{j}(x))_{j\in J}$ ensures the $\mu_j(x,t)$
  in the above
  trace formula representation,
  and thus the solution $q(x,t)$,
  is unique.
  \end{proof}

\section{Application to Small Quasi-periodic initial data with diophantine frequency}

  In this section, we will show our uniqueness result applies to small quasi-periodic intial
  data in order to prove Theorem~\ref{thm:app}.

  In the setting of Theorem~\ref{thm:app} it is natural to label the gaps by the values of the rotation number $\lvert m\cdot \omega\rvert$, or more concisely, by $m\in \Z^\nu$, with the labels $m$ and $-m$ corresponding
  to the same gap, while $m=0$ corresponds to the bottom of the spectrum.
  For $\epsilon < \epsilon_0(a_0,b_0,\kappa_0)$, in \cite{DGL3,BDGL18}, the following bounds were established:
  \begin{enumerate}
  \item For every $m\in \Z^\nu\setminus \{ 0\}$,
  \begin{align}
  \label{ineq:gammaexponential}
  \gamma_m<2\epsilon \exp\left( -\frac{\kappa_0}{2}|m| \right).
  \end{align}
  \item For every $m\in \Z^\nu\setminus \{ 0\}$,
  \begin{align}
  \label{ineq:etapolynomial}
  \eta_{m,0}\leq c|m|^2
  \end{align}
  for a constant $c$ depending only on $\epsilon$ and $\omega$.
  \item For every $m\in \Z^\nu\setminus \{ 0\}$, $n\in \Z^\nu$, $m\ne n$ and $|m|\geq |n|$,
  \begin{align}
  \label{ineq:etadistance}
  \eta_{m,n}\geq a|m|^{-b}
  \end{align}
  for constants $a,b>0$ depending only on $\omega,\epsilon,\kappa_0, \nu$.
  \end{enumerate}
  As a consequence \cite{BDGL18}, the constants $C_m$ defined in \eqref{Cjdefn} obey
  \begin{align}
  \label{ineq:C_mlog}
  C_m\leq F\exp\left(F\log|m|\log\log|m|\right)
  \end{align}
  for $F=F(a,b,\kappa_0,\nu,\omega)$.

  Our goal is to use the above estimates to show that the spectrum of $H_V$, for $V$ of the form \eqref{eq:Vseries}, obeys the Craig-type conditions  \eqref{eq:Craig1},\eqref{eq:Craig2},
  \eqref{eq:Craig3}, \eqref{eq:Craig4}, and therefore that Theorem~\ref{thm:Dubrovin} applies. This will prove Theorem~\ref{thm:app}.

  \begin{lem}
  \label{lem:craigforapp}
  For $V$ of the form \eqref{eq:Vseries}, and satisfying the Diophantine condition
  \eqref{eq:Diophantine}, conditions
  \eqref{eq:Craig1},\eqref{eq:Craig2},
  \eqref{eq:Craig3}, \eqref{eq:Craig4} are satisfied for any positive integer $n$.
  \end{lem}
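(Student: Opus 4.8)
The plan is to verify each of the four conditions \eqref{eq:Craig1}--\eqref{eq:Craig4} by substituting the quantitative bounds \eqref{ineq:gammaexponential}, \eqref{ineq:etapolynomial}, \eqref{ineq:etadistance}, and \eqref{ineq:C_mlog}, and observing that the \emph{exponential} decay of the gap lengths $\gamma_m$ dominates every competing factor. Each such factor is at worst polynomial in $|m|$ (coming from $\eta_{m,0}\le c|m|^2$, and from the inverse-distance bounds $\eta_{m,k}^{-1}\lesssim |m|^{b}$ via \eqref{ineq:etadistance}) or subexponential (coming from $C_m\le F\exp(F\log|m|\log\log|m|)$, which grows more slowly than $e^{\delta|m|}$ for every $\delta>0$). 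Since the gaps are parametrized by $m\in\Z^\nu$ with $\pm m$ identified, I would first replace every sum over $J$ by a sum over $\Z^\nu\setminus\{0\}$, at the cost of an irrelevant factor of $2$, and record the elementary fact that, by lattice point counting, any sum $\sum_{m\in\Z^\nu}|m|^{p}e^{-c|m|}$ with $c>0$ converges; this single convergence statement will be the workhorse.

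Next I would dispatch \eqref{eq:Craig1}, \eqref{eq:Craig3}, and \eqref{eq:Craig4}, which involve no inner sum over pairs. For \eqref{eq:Craig1}, bounding the summand by $\gamma_m^{1/2}(1+\eta_{m,0}^n)^{1/2}\lesssim |m|^{n}e^{-(\kappa_0/4)|m|}$ reduces convergence to the workhorse sum. For \eqref{eq:Craig4} and \eqref{eq:Craig3}, each expression is a product of the subexponential factor $C_j$, polynomial factors in $|j|$ (from $(1+\eta_{j,0}^n)^{3/2}\lesssim |j|^{3n}$, and in \eqref{eq:Craig3} also from $\eta_{j,0}^{-1}\le a^{-1}|j|^{b}$), and the exponentially small $\gamma_j$ or $\gamma_j^{1/2}$; since $e^{-(\kappa_0/2)|j|}$ beats any subexponential-times-polynomial growth, each expression tends to $0$ as $|j|\to\infty$ and the supremum is therefore finite.

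The main obstacle is \eqref{eq:Craig2}, the only condition with an inner sum over $k\ne j$, since the distance lower bound \eqref{ineq:etadistance} is merely inverse-polynomial and so does not by itself make $\eta_{j,k}^{-1}$ small. The plan here is to use \eqref{ineq:etadistance} in the symmetric form $\eta_{j,k}\ge a\max(|j|,|k|)^{-b}$, hence $\eta_{j,k}^{-1}\le a^{-1}(|j|^{b}+|k|^{b})$, and to factor the inner sum as
\[
\sum_{k\ne j}\frac{\gamma_k^{1/2}\gamma_j^{1/2}}{\eta_{j,k}}
\lesssim \gamma_j^{1/2}\Bigl(|j|^{b}\sum_{k}\gamma_k^{1/2}+\sum_{k}|k|^{b}\gamma_k^{1/2}\Bigr)
\lesssim \gamma_j^{1/2}\,(1+|j|^{b}),
\]
where both $k$-sums converge by the workhorse estimate. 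Multiplying by $C_j(1+\eta_{j,0}^n)^{3/2}$ leaves $C_j$ (subexponential) times a polynomial in $|j|$ times $\gamma_j^{1/2}\lesssim e^{-(\kappa_0/4)|j|}$, which again tends to $0$ uniformly in $j$, so the supremum is finite.

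I would close by emphasizing that the only genuinely quantitative input is the separation of scales: exponentially small gaps against a distance bound that degrades only polynomially and a $C_m$ that grows only subexponentially. It is exactly this separation of rates that makes all four conditions hold for \emph{every} positive integer $n$, with $n$ entering only through the polynomial exponents. Once \eqref{eq:Craig1}--\eqref{eq:Craig4} are verified, Theorem~\ref{thm:Dubrovin} applies to every $V\in\cP(\omega,\epsilon,\kappa_0)$, which is precisely Theorem~\ref{thm:app}.
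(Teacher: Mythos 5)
Your proposal is correct and follows essentially the same route as the paper: substitute the bounds \eqref{ineq:gammaexponential}, \eqref{ineq:etapolynomial}, \eqref{ineq:etadistance}, \eqref{ineq:C_mlog}, let the exponential decay of $\gamma_m$ absorb the polynomial and subexponential factors, and handle the inner sum in \eqref{eq:Craig2} via the bound $\eta_{j,k}\ge a\max(|j|,|k|)^{-b}$ (the paper phrases this as a split into $|k|>|j|$ and $|k|<|j|$, which is the same estimate). No gaps; the argument is complete as outlined.
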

  \begin{proof}
  For \eqref{eq:Craig1}, we combine \eqref{ineq:gammaexponential} and \eqref{ineq:etapolynomial} to find
  \begin{align*}
  \sum_{m\in \Z^{\nu}\setminus \{ 0\}}\gamma_{m}^{1/2}(1+\eta_{m,0}^n)^{1/2}< \sqrt{2\epsilon}
  \sum_{m\in \Z^{\nu}\setminus \{ 0\}}\exp\left(-\frac{\kappa_0|m|}{4} \right)(1+c^n|m|^{2n})^{1/2}<\infty.
  \end{align*}
  For \eqref{eq:Craig2}, we use \eqref{ineq:etadistance} to bound the sum;
  \begin{align*}
  \sum_{k\ne m}\frac{\gamma_k^{1/2}}{\eta_{m,k}}&=
  \sum_{|k|> |m|}\frac{\gamma_k^{1/2}}{\eta_{m,k}}+\sum_{\substack{|k|<|m| }}\frac{\gamma_k^{1/2}}{\eta_{m,k}}\\
  &\leq \frac{2\epsilon}{a}\left(\sum_{|k|> |m|}|k|^b\exp\left(-\frac{\kappa_0|k|}{4}\right)+
  |m|^b\sum_{\substack{|k|<|m| }}\exp\left(-\frac{\kappa_0|k|}{4}\right)\right)\\
  &\leq \frac{2\epsilon}{a}\left(\sum_{k\ne m}|k|^b\exp\left(-\frac{\kappa_0|k|}{4}\right) \right)(1+|m|^{b}).
  \end{align*}
  And from \eqref{ineq:C_mlog}, \eqref{ineq:etapolynomial}, we have for $m\in \Z^\nu\setminus \{ 0\}$,
  \begin{align*}
  C_m\gamma_{m}^{1/2}(1+\eta_{m,0}^n)^{3/2}
  &\leq
  F\exp\left(F\log|m|\log\log|m|-\frac{\kappa_0|m|}{4}\right)(1+c^n|m|^{2n})^{3/2}.
  \end{align*}
  And by taking the product of these estimates we may conclude
  \begin{align*}
    \sup_{m\neq 0}C_m\gamma_m^{1/2}(1+\eta_{m,0}^n)^{3/2}
    \sum_{k\ne m}\frac{\gamma_k^{1/2}}{\eta_{m,k}}<\infty.
  \end{align*}
  For \eqref{eq:Craig3}, it suffices to note for $m\in \Z^\nu\setminus \{ 0\}$,
  \begin{align*}
  \frac{\gamma_m(1+\eta_{m,0}^n)C_m}{\eta_{m,0}}\leq
  \frac{2\epsilon}{a}F\exp\left(F\log|m|\log\log|m|-\frac{\kappa_0}{2}|m|\right)
  (1+c^n|m|^{2n})|m|^b
  \end{align*}
  and so $\sup_{m\neq 0}\frac{\gamma_m(1+\eta_{m,0}^n)C_m}{\eta_{m,0}}<\infty$.
  \eqref{eq:Craig4} follows immediately from the estimate
  \begin{align*}
  \gamma_{m}^{1/2}(1+\eta_{m,0}^n)^{3/2}C_m\leq
  \sqrt{2\epsilon} F\exp\left(F\log|m|\log\log|m|-\frac{\kappa_0}{4}|m|\right)
  (1+c^n|m|^{2n})^{3/2}
  \end{align*}
  for $m\in \Z^\nu\setminus \{ 0\}$.
  \end{proof}

  \begin{proof}[Proof of Theorem \ref{thm:app}]
  By \cite{DG1}, $\sigma(H_V)=\sigma_{\ac}(H_V)$. Thus, by Kotani theory, $H_V$ is
  reflectionless \cite{Kotani}.
  Furthermore, by Lemma~\ref{lem:craigforapp}, conditions
  \eqref{eq:Craig1},\eqref{eq:Craig2},
  \eqref{eq:Craig3}, and \eqref{eq:Craig4} are satisfied.
  Thus we have, by Theorem
  \ref{thm:Dubrovin}, uniqueness of any solution $q(x,t)$ to \eqref{eq:kdvn}, \eqref{eq:initial},
  provided $q,\partial_{x}^{2n}q \in L^\infty(\R\times [0,T])$.
  \end{proof}

\def\cprime{$'$}

\end{document}